\documentclass[preprint,12pt]{elsarticle}

\usepackage{amssymb}
\usepackage{amsmath}

\usepackage{hyperref}
\usepackage{algorithm,algpseudocode,amsthm,listings,booktabs,subcaption,multirow, makecell}
\usepackage{tikz, url}
\usetikzlibrary{shapes.geometric, arrows.meta, positioning, calc, shadows, fit}
 
\newtheorem{proposition}{Proposition}
\newtheorem{corollary}{Corollary}
 
\begin{document}

\begin{frontmatter}



\title{\texttt{deepbullwhip}: An Open-Source Simulation and Benchmarking for Multi-Echelon Bullwhip Analyses} 

\author{Mansur M. Arief} 

\affiliation{organization={Industrial and Systems Engineering\\Interdisciplinary Research Center for Smart Mobility and Logistics (IRC-SML)\\King Fahd University of Petroleum and Minerals (KFUPM)},
            country={Saudi Arabia}
            }

\begin{abstract}
The bullwhip effect remains operationally persistent despite decades of analytical research. Two computational deficiencies hinder progress: the absence of modular open-source simulation tools for multi-echelon inventory dynamics with asymmetric costs, and the lack of a standardized benchmarking protocol for comparing mitigation strategies across shared metrics and datasets. This paper introduces \texttt{deepbullwhip}, an open-source Python package that integrates a simulation engine for serial supply chains (with pluggable demand generators, ordering policies, and cost functions via abstract base classes, and a vectorized Monte Carlo engine achieving 50 to 90 times speedup) with a registry-based benchmarking framework shipping a curated catalog of ordering policies, forecasting methods, six bullwhip metrics, and demand datasets including WSTS semiconductor billings. Five sets of experiments on a four-echelon semiconductor chain demonstrate cumulative amplification of $427\times$ (Monte Carlo mean across $1{,}000$ paths), a stochastic filtering phenomenon at upstream tiers (CV $= 0.01$), super-exponential lead time sensitivity, and scalability to 20.8 million simulation cells in under 7 seconds. Benchmark experiments reveal a $155\times$ disparity between synthetic AR(1) and real WSTS bullwhip severity under the Order-Up-To policy, and quantify the BWR--NSAmp tradeoff across ordering policies, demonstrating that no single metric captures policy quality.
\end{abstract}

\begin{keyword}
bullwhip effect \sep
supply chain simulation \sep
benchmarking framework \sep
newsvendor cost \sep
order-up-to policy \sep
Monte Carlo simulation \sep
semiconductor supply chain
\end{keyword}

\end{frontmatter}

\section{Introduction}\label{sec:introduction}

The bullwhip effect, the tendency for order variability to increase as one moves upstream in a supply chain, has been a central topic in operations management since \citet{Lee1997} identified its four root causes. \citet{Chen2000} derived closed-form lower bounds on the bullwhip ratio under the Order-Up-To (OUT) policy, establishing that amplification grows quadratically in lead time. Subsequent analytical work has extended these results to proportional ordering policies \citep{Disney2003}, transfer function characterizations \citep{Dejonckheere2003}, correlated stochastic lead times \citep{Disney2025}, and forecast uncertainty metrics \citep{Kourentzes2025}. Despite this substantial analytical progress, the bullwhip effect remains stubbornly persistent in practice. \citet{Brauch2024} systematically reviewed over 100 studies to conclude that demand signal processing and lead time variability continue to dominate as root causes. The 2020--2022 global semiconductor shortage provided a vivid illustration at industry scale: a moderate surge in demand for remote-work electronics triggered successive rounds of panic ordering and capacity double-booking, followed by a supply glut that forced double-digit price corrections in the memory segment by 2023 \citep{Bain2024}.

A key reason for this disconnect between theory and practice is that real supply chains violate the assumptions underlying most analytical results: stationary demand, single-echelon isolation, known distributional parameters, and unlimited upstream capacity \citep{Wang2016}. When demand exhibits structural breaks, regime switching, heavy tails, or when machine learning forecasters with non-standard error distributions replace simple moving averages \citep{Oroojlooyjadid2020,Salinas2020deepar}, simulation becomes the primary tool for evaluating multi-echelon bullwhip dynamics and their cost consequences \citep{Dominguez2014,Sagawa2024}. The need for scalable, reproducible simulation is especially acute in the machine learning era, where new forecasters must be evaluated not on point accuracy alone but on their downstream effect on multi-echelon cost and variance amplification \citep{Ban2019}. At the same time, the growing interest in digital twin approaches for supply chain resilience \citep{Corsini2024} underscores the demand for modular, open simulation engines that can serve as the computational backbone of what-if analysis and scenario planning.

However, the current landscape of computational tools falls short of these needs. Existing simulation environments are either proprietary and license-restricted (AnyLogic, Arena), pedagogical in scope with fixed structure (the Beer Distribution Game of \citet{Sterman1989}), or general-purpose supply chain libraries that address related but different problems. The Python library \texttt{supplychainpy} \citep{supplychainpy}, for instance, provides demand classification, ABC/XYZ analysis, and economic order quantity calculations, but does not model multi-echelon bullwhip propagation or newsvendor costs. The open-source platform \texttt{frePPLe} \citep{frepple} targets production scheduling and capacity planning rather than inventory policy evaluation. As a result, researchers studying the bullwhip effect typically build
ad-hoc simulation scripts that are tightly coupled to specific
assumptions, difficult to replicate, and hard to extend without
rewriting from scratch \citep{Wang2016,Brauch2024}. A related and
arguably more consequential shortcoming is the absence of a standardized
benchmarking protocol. Studies select metrics inconsistently
(bullwhip ratio alone, or Net Stock Amplification alone, or total
cost), test against different demand processes, and compare against
different baseline policies, making cross-study comparison unreliable
\citep{ChenLee2012,Kourentzes2025}. The forecasting literature addressed such an analogous problem through the M-competitions \citep{Makridakis2022}, which established shared datasets and evaluation protocols that enabled cumulative learning across hundreds of methods. No equivalent infrastructure exists for the bullwhip effect studies.

This paper addresses both deficiencies through \texttt{deepbullwhip}, an open-source Python package comprising twofold contributions. \textbf{The first} is a \emph{simulation engine} for serial supply chains in which demand generators, ordering policies, cost functions, and forecasting methods are all substitutable through abstract base classes, together with a vectorized Monte Carlo engine that exploits NumPy broadcasting to process over 20 million simulation cells in under 7 seconds on a single CPU core. \textbf{The second} is a \emph{benchmarking framework} consisting of a decorator-based model registry, a curated catalog of ordering policies, forecasting methods, bullwhip metrics, and demand datasets (including WSTS semiconductor billings \citep{wsts2025} and M5 Walmart retail data \citep{Makridakis2022}), making it easy to produce standardized comparison tables across any combination of registered components. We validate the simulator against the analytical bounds of \citet{Chen2000} and present five sets of experiments on a four-echelon semiconductor supply chain. The simulation experiments demonstrate cumulative bullwhip amplification of $427\times$, a variance ratio filtering phenomenon at upstream tiers (Proposition~\ref{prop:filtering}), and super-exponential sensitivity of cumulative amplification to foundry lead time. The benchmark experiments compare ordering policies and forecasting methods across multiple metrics simultaneously, and reveal a 155-fold disparity between synthetic and real-data bullwhip severity that illustrates the practical importance of empirical benchmarking. The package, with full documentation is released open-source at \url{https://github.com/ai-vnv/deepbullwhip}.

The remainder of this paper is organized as follows. Section~\ref{sec:background} reviews the analytical foundations and existing tools. Section~\ref{sec:engine} describes the simulation engine. Section~\ref{sec:benchmark} presents the benchmarking framework. Section~\ref{sec:experiments} reports the computational experiments. Section~\ref{sec:discussion} discusses findings and implications, and Section~\ref{sec:conclusion} concludes.

\section{Literature review}\label{sec:background}

The bullwhip effect has been studied through analytical, empirical, experimental, and simulation-based methods since \citet{Forrester1961} first observed demand amplification in system dynamics models. \citet{Wang2016} provided a comprehensive review of progress, trends, and open directions. This section organizes the relevant literature around three themes that frame the contributions of the present paper: the analytical characterization of bullwhip through metrics and ordering policies (Section~\ref{sec:bg_analytical}), the role of forecasting methods and their interaction with cost performance (Section~\ref{sec:bg_forecasting}), and the state of computational tools for bullwhip simulation (Section~\ref{sec:bg_tools}).

\subsection{Bullwhip metrics, ordering policies, and demand models}\label{sec:bg_analytical}

The standard bullwhip metric is the variance ratio (BWR) \citep{Chen2000}
\begin{equation}\label{eq:bwr}
    \text{BWR}_k = \frac{\text{Var}(O_k)}{\text{Var}(D_k)},
\end{equation}
where $O_k$ and $D_k$ denote orders placed and demand received at echelon $k$, respectively, with a variance ratio of 1 for a perfectly balanced chain. Furthermore, \citet{Chen2000} derived the foundational lower bound
\begin{equation}\label{eq:bwr_lower_bound}
    \text{BWR}_k \geq 1 + \frac{2(L_k+1)}{p} + \frac{2(L_k+1)^2}{p^2},
\end{equation}
for the Order-Up-To (OUT) policy with moving-average forecasting of window $p$, where $L_k + 1$ is the review lead time (lead time plus one review period). The bound reveals quadratic dependence on lead time, which is the primary reason why semiconductor chains with foundry lead times of 12 weeks or more \citep{monch2013} exhibit far stronger bullwhip than consumer goods chains. \citet{Zhang2004} extended the analysis to minimum mean squared error (MMSE) forecasting under ARMA demand, and \citet{Gilbert2005} studied ARIMA demand processes. More recently, \citet{Gaalman2022} used eigenvalue analysis to characterize when BWR is an increasing function of lead time under general ARMA demand, showing that this property depends on the sign of the demand impulse response rather than on the demand model parameters alone.

\citet{Disney2003} introduced the Net Stock Amplification ratio 
\begin{equation}\label{eq:nsamp}
    \text{NSAmp} = \frac{\text{Var}(I_k)}{\text{Var}(D_k)},
\end{equation}
where $I_k$ is net inventory and demonstrated a fundamental tradeoff: policies that minimize BWR do not necessarily minimize NSAmp, and vice versa. This tradeoff motivates the Proportional Order-Up-To (POUT) policy 
\begin{equation}\label{eq:pout}
    O_k(t) = \alpha \cdot [S_k(t) - \text{IP}_k(t)]^+,
\end{equation}
where $\alpha \in (0,1]$ is a smoothing parameter that interpolates between full demand chasing ($\alpha = 1$, standard OUT) and complete order smoothing ($\alpha \to 0$), and $[\cdot]^+ = \max\{0, \cdot\}$ denotes the positive part. Note that $\alpha$ scales the non-negative gap, so the truncation at zero applies before the smoothing; this follows the formulation in \citet{Disney2003}. \citet{Dejonckheere2003} analyzed the POUT and related policies using transfer function methods and showed that order smoothing reduces peak amplification at the cost of slower inventory recovery. \citet{Cannella2021} extended the POUT analysis to closed-loop supply chains with returns, showing that tuning the inventory controllers can yield large cost savings even when return uncertainty is present. \citet{LiDisney2023} established the equivalence between the POUT and damped-trend OUT policies through eigenvalue analysis, unifying two previously separate streams of research.

Despite its prominence, several authors have questioned whether the standard BWR is the most appropriate metric. \citet{ChenLee2012} showed that the BWR can be completely non-informative about underlying supply chain cost performance when it is not linked to operational details such as decision intervals and lead times. \citet{Kourentzes2025} proposed the Ratio of Forecast Uncertainty (RFU), which compares forecast error variance across echelons and correlates more strongly with inventory costs than the standard BWR. \citet{Brauch2024} reviewed the causes of the bullwhip effect across more than 100 studies and categorized them systematically, concluding that demand signal processing and lead time variability remain the dominant drivers and that measurement inconsistency across studies is itself a barrier to cumulative progress.

\subsection{Forecasting methods and their cost consequences}\label{sec:bg_forecasting}

The choice of forecasting method interacts with the bullwhip effect in non-obvious ways. \citet{Wright2008} compared Holt's method and Brown's double exponential smoothing against standard moving averages and found that the more sophisticated methods substantially reduce BWR, but the cost implications depend on the chain configuration. \citet{Ban2019} demonstrated a broader disconnect between point forecast accuracy and newsvendor cost performance, showing that machine learning methods can improve accuracy by 20\% yet yield no improvement or even worsening of inventory cost when the cost structure is highly asymmetric ($b \gg h$). \citet{Oroojlooyjadid2020} addressed this directly by training neural networks to minimize newsvendor cost rather than forecast error, bypassing the forecast-then-optimize pipeline. \citet{Salinas2020deepar} introduced DeepAR, a probabilistic forecasting method based on autoregressive recurrent networks that produces full predictive distributions rather than point forecasts, enabling direct estimation of safety stock requirements under the newsvendor cost criterion.

These developments highlight a key limitation of the existing bullwhip literature: most analytical results assume that forecasting is exogenous and evaluate the bullwhip ratio in isolation from inventory cost. In a multi-echelon setting, however, the choice of forecaster at the downstream tier affects the demand signal seen by upstream tiers, creating cascading interactions that can only be characterized through simulation. The asymmetric newsvendor cost function penalizes stockouts disproportionately when $b_k \gg h_k$, and the bullwhip effect exacerbates stockout risk by amplifying order variability upstream. Evaluating forecasters on BWR alone, or on point accuracy alone, therefore misses the operational consequence of forecast choice, motivating evaluation across multiple metrics simultaneously.

\subsection{Existing computational tools}\label{sec:bg_tools}

Table~\ref{tab:tools} summarizes the landscape of computational tools available for bullwhip research. Proprietary platforms such as AnyLogic \citep{Borshchev2013} 
and Arena \citep{Kelton2015} provide rich simulation 
environments with multi-echelon support, but their license 
costs, closed-source codebases, and graphical modeling 
paradigms limit reproducibility and programmatic extension. The Beer Distribution Game \citep{Sterman1989} remains the most widely used educational tool, but its fixed four-echelon structure, deterministic demand step, and absence of cost optimization make it unsuitable for research benchmarking. General-purpose open-source supply chain libraries such as \texttt{supplychainpy} \citep{supplychainpy} and \texttt{frePPLe} \citep{frepple} do not model multi-echelon bullwhip propagation or evaluate newsvendor costs. The predominant approach in the research literature is ad-hoc simulation: custom scripts in MATLAB, Python, or R that implement a specific demand model, a specific ordering policy, and a specific set of metrics, with no standardized interfaces for substitution or comparison.

\begin{table*}[htbp]
\centering
\caption{Comparison of computational tools for bullwhip effect research.}\label{tab:tools}
\resizebox{\linewidth}{!}{
\begin{tabular}{lcccccc}
\toprule
Tool & \makecell{Multi-\\echelon} & \makecell{Scalable\\Monte Carlo} & \makecell{Pluggable\\policies} & \makecell{Newsvendor\\cost} & \makecell{Open\\source} & \makecell{Benchmark\\framework} \\
\midrule
AnyLogic & \checkmark & \checkmark & & & & \\
Arena & \checkmark & \checkmark & & & & \\
MIT Beer Game & \checkmark & & & & & \\
\texttt{supplychainpy} & & & & & \checkmark & \\
\texttt{frePPLe} & & & \checkmark & & \checkmark & \\
Ad-hoc scripts & varies & & varies & varies & varies & \\
\midrule
\texttt{deepbullwhip} & \checkmark & \checkmark & \checkmark & \checkmark & \checkmark & \checkmark \\
\bottomrule
\end{tabular}
}
\end{table*}

Two clear gaps emerge from this review. First, no existing open-source tool combines multi-echelon simulation, pluggable policy and forecaster components, asymmetric newsvendor cost evaluation, and scalable Monte Carlo computation. Second, the bullwhip literature lacks a standardized benchmarking protocol that evaluates policies and forecasters across multiple metrics (BWR, NSAmp, fill rate, total cost) on shared demand datasets and chain configurations. \texttt{deepbullwhip} addresses both deficiencies through a modular simulation engine and a registry-based benchmarking framework, as described in the following two sections.

\section{Simulation engine}\label{sec:engine}

This section describes the simulation engine that constitutes the first contribution of the paper. We present the mathematical models underlying the simulation (Section~\ref{sec:models}), the modular software architecture (Section~\ref{sec:architecture}), and the vectorized Monte Carlo engine (Section~\ref{sec:vectorized}).

\subsection{Mathematical models}\label{sec:models}

Consider a serial supply chain with $K$ echelons indexed $k = 1, \ldots, K$, where $k = 1$ is the echelon closest to the end customer. Time is discrete and indexed by $t = 1, \ldots, T$. The state of echelon $k$ at time $t$ comprises on-hand inventory $I_k(t)$, a pipeline of orders in transit $P_k(\tau)$ for $\tau = 1, \ldots, L_k$ where $L_k$ is the deterministic lead time, and a cost function parameterized by holding cost $h_k$ and backorder cost $b_k$. The simulation proceeds through three stages at each period: demand arrives, ordering decisions are made, and costs are incurred.

\paragraph{Demand.} End-customer demand at echelon $k=1$ follows a configurable stochastic process. The default is an AR(1) process with additive seasonality and a structural shock:
\begin{equation}\label{eq:demand}
D(t) = \mu + \phi\bigl(D(t-1) - \mu\bigr) + A\sin\!\left(\frac{2\pi t}{52}\right) + \delta(t) + \varepsilon(t),
\end{equation}
where $\mu$ is the long-run mean, $\phi \in [0,1)$ is the autoregressive coefficient, $A$ is the seasonal amplitude, $\varepsilon(t) \sim \mathcal{N}(0, \sigma^2)$ is white noise, and $\delta(t)$ is a one-time structural shock defined as
\begin{equation}\label{eq:shock}
\delta(t) = \begin{cases} M \cdot \mu & \text{if } t \geq t^*, \\ 0 & \text{otherwise}, \end{cases}
\end{equation}
with magnitude $M$ at period $t^*$. Upstream echelons do not observe end-customer demand directly; instead, each echelon $k > 1$ faces the orders placed by its downstream neighbor as its incoming demand:
\begin{equation}\label{eq:cascade}
D_k(t) = O_{k-1}(t) \quad \text{for } k > 1.
\end{equation}
This demand cascade is the mechanism through which order variability propagates upstream.

\paragraph{Ordering.} Given the incoming demand, each echelon must decide how much to order from its own supplier. Under the default OUT policy, echelon $k$ sets an order-up-to level that covers expected demand over the lead time plus one review period, with a safety buffer calibrated to the newsvendor critical fractile $\alpha_k = b_k/(b_k + h_k)$:
\begin{equation}\label{eq:out_level}
S_k(t) = (L_k + 1)\,\hat\mu_k(t) + z_{\alpha_k}\,\hat\sigma_k(t)\,\sqrt{L_k + 1},
\end{equation}
where $\hat\mu_k(t)$ and $\hat\sigma_k(t)$ are the forecast mean and standard deviation of echelon $k$'s incoming demand, and $z_{\alpha_k} = \Phi^{-1}(\alpha_k)$ is the corresponding safety factor. The order quantity is the gap between this target and the current inventory position $\mathrm{IP}_k(t) = I_k(t) + \sum_{\tau=1}^{L_k} P_k(\tau)$, truncated at zero to prevent negative orders:
\begin{equation}\label{eq:order}
O_k(t) = \max\!\bigl\{0,\; S_k(t) - \mathrm{IP}_k(t)\bigr\}.
\end{equation}

\paragraph{Inventory and cost.} After ordering, inventory is updated to reflect receipts from the pipeline and demand fulfillment:
\begin{equation}\label{eq:inventory}
I_k(t) = I_k(t-1) + R_k(t) - D_k(t),
\end{equation}
where $R_k(t)$ is the quantity received, equal to the order placed $L_k$ periods earlier. Positive inventory incurs holding cost at rate $h_k$; negative inventory (backorders) incurs penalty at rate $b_k$:
\begin{equation}\label{eq:cost}
C_k(t) = h_k \bigl[I_k(t)\bigr]^+ + b_k \bigl[-I_k(t)\bigr]^+.
\end{equation}
Total supply chain cost aggregates across all echelons and periods:
\begin{equation}\label{eq:total_cost}
\mathrm{TC} = \sum_{k=1}^{K} \sum_{t=1}^{T} C_k(t).
\end{equation}

\paragraph{Bullwhip amplification.} The cumulative bullwhip ratio through echelon $K$ is the product of the per-echelon variance ratios defined in Eq.~\eqref{eq:bwr}:
\begin{equation}\label{eq:cum_bwr}
\mathrm{BWR}_{\mathrm{cum}}(K) = \prod_{k=1}^{K} \mathrm{BWR}_k.
\end{equation}
For $k > 1$, the demand cascade (Eq.~\ref{eq:cascade}) implies $D_k = O_{k-1}$, so $\mathrm{BWR}_k = \mathrm{Var}(O_k)/\mathrm{Var}(O_{k-1})$. Since both numerator and denominator depend on the same demand realization, the cross-path variability of $\mathrm{BWR}_k$ is governed by their correlation. The following proposition formalizes this relationship.

\begin{proposition}[Variance ratio filtering]\label{prop:filtering}
Let $X_\omega = \mathrm{Var}(O_k \mid \omega)$ and $Y_\omega = \mathrm{Var}(O_{k-1} \mid \omega)$ denote the order variances at echelons $k$ and $k{-}1$ conditional on demand realization $\omega$, and let $\mathrm{BWR}_k(\omega) = X_\omega / Y_\omega$. If $X_\omega$ and $Y_\omega$ are positive random variables with finite first and second moments, then
\begin{equation}\label{eq:cv_ratio}
\mathrm{CV}\!\bigl(\mathrm{BWR}_k\bigr) \;\approx\; \sqrt{\mathrm{CV}(X)^2 + \mathrm{CV}(Y)^2 - 2\,\rho_{XY}\,\mathrm{CV}(X)\,\mathrm{CV}(Y)},
\end{equation}
where $\rho_{XY} = \mathrm{Corr}(X, Y)$ and the approximation holds to first order in $\mathrm{CV}(X)$ and $\mathrm{CV}(Y)$. In particular, $\mathrm{CV}(\mathrm{BWR}_k) \to 0$ as $\rho_{XY} \to 1$ whenever $\mathrm{CV}(X) \approx \mathrm{CV}(Y)$.
\end{proposition}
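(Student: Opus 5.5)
The plan is to apply the delta method to the ratio $R = X/Y$ around the point of means $(\mu_X,\mu_Y)$, where $\mu_X = \mathbb{E}[X]>0$ and $\mu_Y = \mathbb{E}[Y]>0$. Both means are finite and positive by hypothesis. Write $X = \mu_X(1+u)$ and $Y = \mu_Y(1+v)$ with $u = (X-\mu_X)/\mu_X$ and $v = (Y-\mu_Y)/\mu_Y$. By construction $\mathbb{E}[u]=\mathbb{E}[v]=0$, $\mathrm{Var}(u)=\mathrm{CV}(X)^2$, $\mathrm{Var}(v)=\mathrm{CV}(Y)^2$, and $\mathrm{Cov}(u,v)=\rho_{XY}\,\mathrm{CV}(X)\,\mathrm{CV}(Y)$. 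These are the only moment facts needed, and they exist because of the finite second moments.

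Next, expand the ratio as
\begin{equation*}
R = \frac{\mu_X}{\mu_Y}\,\frac{1+u}{1+v} = \frac{\mu_X}{\mu_Y}\bigl(1 + u - v + O(u^2 + v^2)\bigr).
\end{equation*}
This is the first-order Taylor expansion of $f(x,y)=x/y$ at $(\mu_X,\mu_Y)$, with gradient $(1/\mu_Y,\,-\mu_X/\mu_Y^2)$. Keeping only linear terms gives $\mathbb{E}[R]\approx \mu_X/\mu_Y$. It also gives
\begin{equation*}
\mathrm{Var}(R)\approx (\mu_X/\mu_Y)^2\,\mathrm{Var}(u-v) = (\mu_X/\mu_Y)^2\bigl(\mathrm{CV}(X)^2+\mathrm{CV}(Y)^2-2\rho_{XY}\mathrm{CV}(X)\mathrm{CV}(Y)\bigr).
\end{equation*}
Dividing the standard deviation by the approximate mean yields Eq.~\eqref{eq:cv_ratio}.

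The limiting claim then follows algebraically. Write $a=\mathrm{CV}(X)$ and $b=\mathrm{CV}(Y)$. The radicand is
\begin{equation*}
(a-b)^2 + 2(1-\rho_{XY})ab,
\end{equation*}
which tends to $(a-b)^2\approx 0$ as $\rho_{XY}\to 1$ with $a\approx b$. This also shows that the expression is a genuine nonnegative quantity, since $|\rho_{XY}|\le 1$.

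The main obstacle is making precise what ``first order'' means. The neglected terms are quadratic in $(u,v)$. They contribute $O(a^2+b^2)$ to the mean and $O\bigl((a+b)^3\bigr)$ to the variance, provided suitable higher moments are controlled. Finite second moments alone do not guarantee even that $\mathbb{E}[X/Y]$ exists, since $Y$ could be near zero. I would therefore state the result as a formal delta-method approximation, valid when $a$ and $b$ are small. To make it rigorous, I would add a mild condition such as $Y$ bounded away from zero, or the existence of third and fourth moments. I would then bound the Taylor remainder $R - (\mu_X/\mu_Y)(1+u-v)$ by $C(u^2+|u||v|+v^2)$ on the event $|v|\le 1/2$, and handle the complementary event using Chebyshev's inequality. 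I would keep this remainder analysis brief, since the proposition only asserts the approximation to first order.
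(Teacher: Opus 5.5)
Your proposal is correct and follows essentially the same route as the paper's proof: a first-order multivariate delta method for $X/Y$ at $(\mu_X,\mu_Y)$ with gradient $(1/\mu_Y,\,-\mu_X/\mu_Y^2)$, followed by division by $(\mu_X/\mu_Y)^2$. Your normalized deviations $u,v$ and your rewriting of the radicand as $(a-b)^2+2(1-\rho_{XY})ab$ are only cosmetic refinements, though the latter handles $\mathrm{CV}(X)\approx\mathrm{CV}(Y)$ a bit more cleanly than the paper's exact-equality case $c\sqrt{2(1-\rho_{XY})}$; your caveat that finite second moments do not even make $\mathbb{E}[X/Y]$ well defined is valid and goes beyond the paper, but in your optional rigorization extra moments of $X$ and $Y$ cannot replace a condition on $1/Y$ (e.g.\ $Y$ bounded away from zero) for controlling the event $\{v<-1/2\}$.
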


\noindent The proof, based on the multivariate delta method, is given in Appendix~\ref{app:proof_filtering}. Equation~\eqref{eq:cv_ratio} predicts that the cross-path variability of $\mathrm{BWR}_k$ collapses at echelons where the demand cascade induces high correlation between successive order variances. We validate this prediction empirically in Section~\ref{sec:exp_sim}.

More broadly, the multiplicative structure of Eq.~\eqref{eq:cum_bwr} has an important consequence: even moderate per-echelon ratios compound rapidly across tiers. Four echelons each amplifying by a factor of $3$ produce cumulative amplification of $3^4 = 81$, and heterogeneous lead times (e.g., in semiconductor supply chain shown in Table~\ref{tab:config}) produce amplification far beyond this symmetric case. This compounding is the primary reason why the single-echelon bounds of \citet{Chen2000} underestimate upstream variance exposure in multi-echelon chains, and why full-chain simulation is necessary, which the proposed software engine enables.

An immediate consequence of Proposition~\ref{prop:filtering} extends to the cumulative bullwhip ratio itself.

\begin{corollary}[Cumulative concentration]\label{cor:cumulative}
Let $\mathrm{BWR}_{\mathrm{cum}} = \prod_{k=1}^{K} \mathrm{BWR}_k$. Applying the multivariate delta method to the product, the cross-path variability of $\mathrm{BWR}_{\mathrm{cum}}$ satisfies
\begin{equation}\label{eq:cv_cum}
\mathrm{CV}\!\bigl(\mathrm{BWR}_{\mathrm{cum}}\bigr)^2 \;\approx\; \sum_{k=1}^{K} \mathrm{CV}(\mathrm{BWR}_k)^2 + 2 \!\!\sum_{1 \le i < j \le K}\!\! \rho_{ij}\,\mathrm{CV}(\mathrm{BWR}_i)\,\mathrm{CV}(\mathrm{BWR}_j),
\end{equation}
where $\rho_{ij} = \mathrm{Corr}(\mathrm{BWR}_i, \mathrm{BWR}_j)$. When the stochastic filtering of Proposition~\ref{prop:filtering} drives $\mathrm{CV}(\mathrm{BWR}_k) \to 0$ for all $k \ge k^*$, the sum is dominated by the first $k^*{-}1$ terms, and the \emph{uncertainty} in cumulative amplification is governed by the downstream tiers---precisely the tiers that contribute the least amplification.
\end{corollary}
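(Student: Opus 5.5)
The plan is to apply the multivariate delta method to the logarithm of the product. Write $B_k = \mathrm{BWR}_k$ and set $\mu_k = \mathbb{E}[B_k] > 0$. The ratios $B_k$ inherit positivity and finite second moments from Proposition~\ref{prop:filtering}. Define $g(b_1,\dots,b_K) = \prod_k b_k$ and expand it to first order around $\mu = (\mu_1,\dots,\mu_K)$. Because $\partial g/\partial b_k = \prod_{j\ne k} b_j$, the expansion gives
\begin{equation*}
\frac{g(B) - g(\mu)}{g(\mu)} \approx \sum_{k=1}^{K} \frac{B_k - \mu_k}{\mu_k}.
\end{equation*}
Equivalently, this is the linearization $\log g(B) \approx \log g(\mu) + \sum_k (B_k-\mu_k)/\mu_k$. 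To first order, $\mathbb{E}[\mathrm{BWR}_{\mathrm{cum}}] \approx g(\mu)$, so the left-hand side is the relative deviation of $\mathrm{BWR}_{\mathrm{cum}}$ from its mean.

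The next step is to take variances of both sides. The left side gives $\mathrm{CV}(\mathrm{BWR}_{\mathrm{cum}})^2$. The right side is the variance of a sum of the standardized deviations $U_k = (B_k-\mu_k)/\mu_k$, for which $\mathrm{Var}(U_k) = \mathrm{CV}(B_k)^2$ and $\mathrm{Cov}(U_i,U_j) = \rho_{ij}\,\mathrm{CV}(B_i)\,\mathrm{CV}(B_j)$. Expanding $\mathrm{Var}(\sum_k U_k)$ then yields Eq.~\eqref{eq:cv_cum} directly. The approximation is exact to first order in the CVs, in the same sense as Proposition~\ref{prop:filtering}: the neglected terms are products of three or more centered $U_k$'s.

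For the concentration claim, suppose $\mathrm{CV}(B_k)\to 0$ for $k\ge k^*$. By Cauchy--Schwarz, $|\rho_{ij}|\le 1$, so every term of Eq.~\eqref{eq:cv_cum} involving an index $\ge k^*$ is bounded by a product that contains some $\mathrm{CV}(B_k)$ with $k \ge k^*$. Each such term therefore vanishes. What remains is the sum over $i,j<k^*$, which is the claimed dominance by the first $k^*-1$ tiers. The remark that these tiers "contribute the least amplification" is an interpretive statement about the experiments and is not part of the formal derivation.

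The main obstacle is rigor in the delta-method step. A first-order Taylor expansion is only an approximation unless the CVs are small or an asymptotic regime is specified. I would therefore state it, like Proposition~\ref{prop:filtering}, as a first-order approximation in $\max_k \mathrm{CV}(B_k)$. I would also note that the finite second moments of $B_k$ must be assumed, because ratios of positive variables with finite moments need not themselves have finite moments. The cleanest route is to invoke the same approximation regime used in Appendix~\ref{app:proof_filtering} rather than attempt an exact bound.
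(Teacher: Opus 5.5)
Your proposal is correct and is essentially the paper's proof: the paper applies the multivariate delta method to $g=\prod_k Z_k$ with gradient entries $\mu_{\mathrm{cum}}/\mu_j$ and divides $\nabla g^\top\Sigma\nabla g$ by $\mu_{\mathrm{cum}}^2$, which is exactly your computation of $\mathrm{Var}(\sum_k U_k)$; your explicit use of $|\rho_{ij}|\le 1$ just makes precise the paper's step that the ``terms involving those indices'' vanish. One small fix: your opening claim that the $B_k$ inherit finite second moments from Proposition~\ref{prop:filtering} is false as stated, and, as you correctly note at the end, this has to be assumed separately.
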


\noindent The proof follows directly from the multivariate delta method applied to $g(Z_1,\ldots,Z_K) = \prod_k Z_k$; the full derivation is given in Appendix~\ref{app:proof_cumulative}. Corollary~\ref{cor:cumulative} carries a practical implication that is, at first glance, counterintuitive: the foundry tier (E3) contributes the largest per-echelon amplification ($\mathrm{BWR}_3 \approx 10$) yet contributes negligibly to the \emph{stochastic uncertainty} in cumulative BWR. We validate this prediction in Section~\ref{sec:exp_sim}, where the full covariance formula~\eqref{eq:cv_cum} predicts $\mathrm{CV}(\mathrm{BWR}_{\mathrm{cum}}) = 0.232$ against an empirical value of $0.236$ (1.8\% error across 5{,}000 paths).

\begin{table}[htbp]
\centering
\caption{Default semiconductor supply chain configuration. The critical fractile $\alpha_k = b_k/(b_k + h_k)$ increases upstream, reflecting higher safety stock requirements at tiers with longer pipelines.}\label{tab:config}
\begin{tabular}{clcccc}
\toprule
Echelon & Role & $L_k$ (wk) & $h_k$ & $b_k$ & $\alpha_k$ \\
\midrule
E1 & Distributor / OEM & 2 & 0.15 & 0.60 & 0.80 \\
E2 & Assembly \& Test (OSAT) & 4 & 0.12 & 0.50 & 0.81 \\
E3 & Foundry / Fab & 12 & 0.08 & 0.40 & 0.83 \\
E4 & Wafer / Material & 8 & 0.05 & 0.30 & 0.86 \\
\bottomrule
\end{tabular}
\end{table}

The cost ratios $b_k/h_k$ range from 4.0 at E1 to 6.0 at E4, corresponding to critical fractiles $\alpha_k \in [0.80, 0.86]$. These are consistent with the service level targets of 80–90\% commonly reported for multi-echelon semiconductor supply chains \citep{SilverPykeThomas2017,CachonTerwiesch2012}, where annual holding costs of 20–30\% of inventory value and high stockout penalties from customer contract losses produce $b/h$ ratios in the range 4–10 \citep{Zipkin2000}.

\subsection{Software architecture}\label{sec:architecture}

The \texttt{deepbullwhip} package is organized to follow the Strategy pattern \citep{GoF1994}: demand generation, ordering policy, cost computation, and forecasting are each encapsulated behind an abstract base class (ABC) with a single required method, so that any component can be replaced without modifying the simulation engine. The \texttt{DemandGenerator} ABC requires a method \texttt{generate(T, seed)} that returns a 1D demand array; the \texttt{OrderingPolicy} ABC requires \texttt{compute\_order(ip, fm, fs)} that returns a scalar order quantity; the \texttt{CostFunction} ABC requires \texttt{compute(inventory)} that returns a scalar cost; and the \texttt{Forecaster} ABC requires \texttt{forecast(history, steps)} that returns a mean--standard-deviation pair. Each ABC is accompanied by multiple concrete model registries described in Section~\ref{sec:benchmark}.

\begin{table}[htbp]
\centering
\caption{Package module structure.}\label{tab:modules}
\resizebox{\linewidth}{!}{
\begin{tabular}{ll}
\toprule
Module & Responsibility \\
\midrule
\texttt{demand/} & Demand generators (AR(1), ARMA, Beer Game, Replay) \\
\texttt{policy/} & Ordering policies (OUT, POUT, Smoothing, Constant) \\
\texttt{cost/} & Cost functions (Newsvendor, Perishable) \\
\texttt{forecast/} & Forecasters (Na\"ive, Moving Average, Exp.\ Smoothing, DeepAR) \\
\texttt{chain/} & Serial and vectorized simulation engines \\
\texttt{metrics/} & BWR, NSAmp, Fill Rate, Total Cost, Chen bound \\
\texttt{datasets/} & Data loaders (WSTS, M5, Beer Game, synthetic) \\
\texttt{benchmark/} & Registry, BenchmarkRunner, report export \\
\texttt{diagnostics/} & Plot functions, network diagrams \\
\bottomrule
\end{tabular}
}
\end{table}

The serial engine (\texttt{SerialSupplyChain}) iterates over $T$ periods. At each period, echelon $k = 1$ receives the customer demand $D(t)$ and exogenous forecasts $(\hat\mu_t, \hat\sigma_t)$; it computes its order via Eqs.~\eqref{eq:out_level}--\eqref{eq:order}, appends the order to its pipeline, satisfies demand from inventory via Eq.~\eqref{eq:inventory}, and evaluates cost via Eq.~\eqref{eq:cost}. The order placed by echelon $k$ becomes the demand for echelon $k+1$ through the cascade in Eq.~\eqref{eq:cascade}. Upstream echelons ($k > 1$) estimate their own forecasts as rolling statistics of the orders received over the most recent 8 periods. This cascade continues through all $K$ echelons before advancing to $t+1$. Figure~\ref{fig:architecture} illustrates the simulation engine architecture.

\begin{figure}
    \centering
    \newcommand{\method}[1]{{\ttfamily\small\color{blue!70!black}#1}}
    \resizebox{\textwidth}{!}{
    \begin{tikzpicture}[
        font=\sffamily\small,
        >=Stealth,
        engine_style/.style={
            draw=black, thick, fill=blue!8, text width=4.4cm, text centered, 
            minimum height=1.4cm, rounded corners=3pt
        },
        abc_style/.style={
            draw=black, thick, fill=white, text width=3.8cm, 
            minimum height=1.1cm, rounded corners=2pt, align=center
        },
        arrow_label/.style={
            font=\small, color=black, fill=white, inner sep=2pt
        },
    ]
    
        \node[engine_style] (engine) {
            \textbf{SerialSupplyChain}\\[1pt]
            {\small $t = 1,\ldots,T$\;\; $k = 1,\ldots,K$}
        };
    
        \node[abc_style, above=1.4cm of engine] (demand) {
            \textbf{DemandGenerator}\\[1pt]
            \method{generate(T, seed)}
        };
    
        \node[abc_style, right=2.2cm of engine] (policy) {
            \textbf{OrderingPolicy}\\[1pt]
            \method{compute\_order(ip, $\hat\mu$, $\hat\sigma$)}
        };
    
        \node[abc_style, below=1.4cm of engine] (forecaster) {
            \textbf{Forecaster}\\[1pt]
            \method{forecast(hist, steps)}
        };
    
        \node[abc_style, left=2.2cm of engine] (cost) {
            \textbf{CostFunction}\\[1pt]
            \method{compute(inventory)}
        };
    
        \draw[->, black, thick] (demand) -- (engine) 
            node[arrow_label, midway, right, xshift=2pt] {$D_k(t)$};
        
        \draw[->, black, thick] (forecaster) -- (engine) 
            node[arrow_label, midway, right, xshift=2pt] 
            {$\hat\mu_k,\; \hat\sigma_k$};
        
        \draw[->, black, thick] 
            ([yshift=2pt]engine.east) -- ([yshift=2pt]policy.west) 
            node[arrow_label, midway, above, yshift=1pt] 
            {$\mathrm{IP}_k,\; \hat\mu_k,\; \hat\sigma_k$};
        \draw[<-, black, thick] 
            ([yshift=-4pt]engine.east) -- ([yshift=-4pt]policy.west) 
            node[arrow_label, midway, below, yshift=-1pt] {$O_k(t)$};
        
        \draw[->, black, thick] 
            ([yshift=2pt]engine.west) -- ([yshift=2pt]cost.east) 
            node[arrow_label, midway, above, yshift=1pt] {$I_k(t)$};
        \draw[<-, black, thick] 
            ([yshift=-4pt]engine.west) -- ([yshift=-4pt]cost.east) 
            node[arrow_label, midway, below, yshift=-1pt] {$C_k(t)$};
    
        \draw[->, blue!70!black, line width=1pt, dashed, 
              rounded corners=5pt] 
            (policy.north) |- (demand.east)
            node[pos=0.25, right, font=\small\itshape, 
                 color=blue!70!black, fill=white, inner sep=2pt] 
            {$D_{k+1}(t) = O_k(t)$};
    
    \end{tikzpicture}
    }
    \caption{Architecture of the \texttt{deepbullwhip} simulation engine. At each period $t$, the engine receives demand from the \texttt{DemandGenerator}, obtains forecasts from the \texttt{Forecaster}, queries the \texttt{OrderingPolicy} for an order quantity $O_k(t)$, and evaluates inventory cost via the \texttt{CostFunction}. The dashed arrow shows the demand cascade: orders at echelon $k$ become demand at echelon $k{+}1$.}\label{fig:architecture}
\end{figure} 

\subsection{Vectorized Monte Carlo engine}\label{sec:vectorized}

Monte Carlo simulation is essential for characterizing the distributional properties of bullwhip metrics across demand realizations, but a na\"ive Python implementation that loops over $N$ paths, $K$ echelons, and $T$ periods is prohibitively slow for large-scale experiments. The vectorized engine addresses this by pre-allocating three-dimensional NumPy arrays of shape $(N, K, T)$ for orders, inventory, and costs, and a pipeline tensor of shape $(N, K, L_{\max})$ where $L_{\max} = \max_k L_k$. At each time step $t$, the engine broadcasts the OUT policy computation across the $N$ and $K$ dimensions simultaneously:
\begin{equation}\label{eq:vec_level}
\mathbf{S}(t) = (\mathbf{L} + \mathbf{1}) \odot \hat{\boldsymbol{\mu}}(t) + \mathbf{z}_\alpha \odot \hat{\boldsymbol{\sigma}}(t) \odot \sqrt{\mathbf{L} + \mathbf{1}},
\end{equation}
\begin{equation}\label{eq:vec_order}
\mathbf{O}(t) = \max\!\bigl\{\mathbf{0},\; \mathbf{S}(t) - \mathbf{IP}(t)\bigr\},
\end{equation}
where $\mathbf{L}$ and $\mathbf{z}_\alpha$ are vectors of length $K$, $\hat{\boldsymbol{\mu}}(t)$ and $\hat{\boldsymbol{\sigma}}(t)$ are $(N \times K)$ matrices, and $\odot$ denotes element-wise multiplication with broadcasting. A circular buffer indexed by a pointer array of length $K$ provides $O(1)$ pipeline receipt operations, replacing the $O(L_k)$ list manipulation of the serial engine. The time loop over $T$ remains sequential because the demand cascade in Eq.~\eqref{eq:cascade} introduces a within-period dependency, but all $N$ paths and the policy computations within each time step are fully parallelized. This design achieves 50 to 90 times speedup over the serial engine across the tested range of $N \in [10, 5000]$, $T \in [52, 5200]$, and $K \in [2, 16]$, with both engines scaling as $O(NKT)$ in wall time. The largest stress test processed $5{,}000 \times 520 \times 8 = 20.8 \times 10^6$ simulation cells in 6.8 seconds with 476\,MB peak memory on a single CPU core. Full scalability benchmarks are reported in Section~\ref{sec:exp_scalability}.


\section{Benchmarking framework}\label{sec:benchmark}

Every new bullwhip study effectively starts from scratch: researchers
select their own metrics, code their own demand processes, and compare
against ad-hoc baselines, making cross-study comparison unreliable
\citep{Brauch2024,Kourentzes2025}.  The forecasting community solved an
analogous problem through the M-competitions
\citep{Makridakis2020m4,Makridakis2022}, which established shared
datasets and evaluation protocols that enabled cumulative learning
across hundreds of methods.  No equivalent infrastructure exists for the
bullwhip effect. The benchmarking framework in \texttt{deepbullwhip} addresses this gap
through three components: a model registry for extensibility
(Section~\ref{sec:registry}), a curated catalog of models, metrics, and
datasets (Section~\ref{sec:catalog}), and a benchmark runner for
standardized evaluation (Section~\ref{sec:runner}).

\subsection{Model registry}\label{sec:registry}

Researchers should be able to introduce a new ordering policy,
forecaster, or metric and immediately evaluate it against every baseline
in the catalog without touching the simulation engine or the runner.
We achieve this through a \emph{decorator-based model registry}, a
pattern widely adopted in machine learning toolkits such as OpenMMLab
\citep{Chen2019mmdetection} and Detectron2 \citep{Wu2019detectron2}. The registry maintains a global dictionary mapping string identifiers to
class constructors across five categories: demand generators, ordering
policies, cost functions, forecasters, and metrics.  Registration
happens at import time via a Python class decorator; retrieval happens at
run time by name (Algorithm~\ref{alg:registry}).  A researcher who
develops a new ordering policy need only subclass the
\texttt{OrderingPolicy} abstract base class, implement
\texttt{compute\_order}, and apply the \texttt{@register} decorator.
The policy is then available to the runner by name, with no changes to
any other file.  A code example appears in Appendix~\ref{app:code}.
The pseudocode for the registry mechanism is given in Algorithm~\ref{alg:registry} (Appendix~\ref{app:algorithms}). This architecture follows the Open/Closed Principle
\citep{Martin2003agile}: the framework is open to extension so that community contributions can be integrated with a single
decorated class and no changes to the core package.

\subsection{Catalog of models, metrics, and datasets}\label{sec:catalog}

Table~\ref{tab:catalog} summarizes the components that ship with
\texttt{deepbullwhip}.  The catalog spans four ordering policies
(from full demand chasing under OUT to complete smoothing under POUT),
three classical forecasters that mirror the assumptions in the
analytical bounds of \citet{Chen2000} and \citet{Zhang2004} plus the DeepAR probabilistic forecaster \citep{Salinas2020deepar}, six
bullwhip and cost metrics reflecting the recent consensus that
multi-metric evaluation is essential
\citep{Brauch2024,Kourentzes2025}, and four demand generators covering
both synthetic processes and historical replay.  Default parameter
values, descriptions, and literature references for each entry are
distributed as a structured JSON file alongside the package. The dataset module bundles a 60-month sample of WSTS semiconductor
billings \citep{wsts2025} and provides a loader for the M5 Walmart
dataset \citep{Makridakis2022} that downloads it on first use (though requiring an internet connection and account creation).
Three predefined chain configurations are registered:
\texttt{semiconductor\_4tier} (Table~\ref{tab:config}),
\texttt{beer\_game} \citep{Sterman1989}, and
\texttt{consumer\_2tier}.

\begin{table}[htbp]
\centering
\caption{Registered components in the benchmarking catalog.}\label{tab:catalog}
\resizebox{\linewidth}{!}{
\begin{tabular}{llp{7cm}}
\toprule
Category & Name & Description \\
\midrule
\multirow{4}{*}{Policies}
  & \texttt{order\_up\_to} & Standard OUT, Eq.~\eqref{eq:order} \citep{Chen2000} \\
  & \texttt{proportional\_out} & POUT with smoothing $\alpha \in (0,1]$ \citep{Disney2003} \\
  & \texttt{smoothing\_out} & OUT with exponential order smoothing \\
  & \texttt{constant\_order} & Fixed order quantity each period \\
\midrule
\multirow{4}{*}{Forecasters}
  & \texttt{naive} & Constant forecast (sample mean and std) \\
  & \texttt{moving\_average} & Rolling window of configurable length $p$ \\
  & \texttt{exponential\_smoothing} & Single exp.\ smoothing with parameter $\alpha$ \\
  & \texttt{deepar} & DeepAR probabilistic forecaster \citep{Salinas2020deepar} \\
\midrule
\multirow{6}{*}{Metrics}
  & \texttt{BWR} & Bullwhip ratio, Eq.~\eqref{eq:bwr} \\
  & \texttt{CUM\_BWR} & Cumulative BWR, Eq.~\eqref{eq:cum_bwr} \\
  & \texttt{NSAmp} & Net Stock Amplification \citep{Disney2003} \\
  & \texttt{FILL\_RATE} & Fraction of periods with $I_k(t) \geq 0$ \\
  & \texttt{TC} & Total cost, Eq.~\eqref{eq:total_cost} \\
  & \texttt{ChenLowerBound} & Analytical bound of \citet{Chen2000} \\
\midrule
\multirow{4}{*}{Demand}
  & \texttt{semiconductor\_ar1} & AR(1) with shock, Eq.~\eqref{eq:demand} \\
  & \texttt{beer\_game} & Step function: 4 then 8 \citep{Sterman1989} \\
  & \texttt{arma} & Configurable ARMA($p$,$q$) process \\
  & \texttt{replay} & Replay historical data from array or CSV \\
\midrule
\multirow{2}{*}{Costs}
  & \texttt{newsvendor} & $h[I]^+ + b[-I]^+$, Eq.~\eqref{eq:cost} \\
  & \texttt{perishable} & Newsvendor + obsolescence penalty \\
\bottomrule
\end{tabular}
}
\end{table}

\subsection{Benchmark runner}\label{sec:runner}

The \texttt{BenchmarkRunner} class automates standardized evaluation.
It accepts a chain configuration~$\mathcal{C}$, a demand
generator~$\mathcal{G}$, a time horizon~$T$, a number of Monte Carlo
paths~$N$, and a random seed.  Given sets of policy
names~$\mathcal{P}$, forecaster names~$\mathcal{F}$, and metric
names~$\mathcal{M}$, it evaluates every $(p, f)$ combination against
every metric~$m$ at every echelon~$k$, returning a structured table
with columns (policy, forecaster, echelon, metric, value).
Algorithm~\ref{alg:runner} formalizes the procedure.  Policies and
forecasters may be specified with custom parameters; default values are
used otherwise.  Output is directly exportable to \LaTeX{} and CSV
through the companion \texttt{report} module.

\begin{algorithm}[htbp]
\caption{Benchmark evaluation procedure}\label{alg:runner}
\begin{algorithmic}[1]
\Require Chain config $\mathcal{C}$, demand generator $\mathcal{G}$, horizon $T$, paths $N$, seed $s$
\Require Policy set $\mathcal{P}$, forecaster set $\mathcal{F}$, metric set $\mathcal{M}$
\Ensure Results table $\mathcal{R}$ with columns (policy, forecaster, echelon, metric, value)
\State $\mathbf{D} \gets \mathcal{G}.\text{generate\_batch}(T, N, s)$ \Comment{$(N \times T)$ demand matrix}
\For{each policy $p \in \mathcal{P}$}
  \For{each forecaster $f \in \mathcal{F}$}
    \State Build chain from $\mathcal{C}$ with policy $p$
    \State $(\hat{\boldsymbol{\mu}}, \hat{\boldsymbol{\sigma}}) \gets f.\text{generate\_forecasts}(\mathbf{D})$ \Comment{$(N \times T)$ forecast arrays}
    \State $\text{result} \gets \text{chain.simulate}(\mathbf{D}, \hat{\boldsymbol{\mu}}, \hat{\boldsymbol{\sigma}})$
    \For{each echelon $k = 1, \ldots, K$}
      \For{each metric $m \in \mathcal{M}$}
        \State Append $(p, f, k, m, m.\text{compute}(\text{result}, \mathbf{D}, k))$ to $\mathcal{R}$
      \EndFor
    \EndFor
  \EndFor
\EndFor
\State \Return $\mathcal{R}$
\end{algorithmic}
\end{algorithm}

Finally, it is worth noting that because the runner discovers components through the registry at
execution time, adding a new policy, forecaster, or metric requires no
modification to the runner itself. This is a key advantage of the registry-based architecture as part of our design choices, as it allows for easy extension of the framework without modifying the core components.

\section{Numerical experiments}\label{sec:experiments}

We present five sets of experiments organized into three subsections. Section~\ref{sec:exp_sim} reports three simulation experiments that validate the engine and characterize the structural properties of multi-echelon bullwhip dynamics: single-path propagation, Monte Carlo stochastic filtering, and lead time sensitivity. Section~\ref{sec:exp_scalability} benchmarks computational scalability. Section~\ref{sec:exp_benchmark} demonstrates the benchmarking framework through four comparative studies using the \texttt{BenchmarkRunner}. All experiments use the default semiconductor chain configuration from Table~\ref{tab:config}. Jupyter notebooks reproducing each experiment are included in the package repository. Supplementary figures (order streams, inventory trajectories, cost decomposition, echelon details) appear in Appendix~\ref{app:diagnostics}.

\subsection{Simulation experiments}\label{sec:exp_sim}

The default experiments included in the package are as follows:

\subsubsection{Experiment 1: Bullwhip propagation} 

A single demand realization ($T = 156$ weeks, seed $= 42$) is simulated with a constant forecast to isolate the OUT policy's amplification mechanism. Table~\ref{tab:exp1} reports the results. The bullwhip ratio increases from 1.12 at E1 to 37.56 at E4, producing cumulative $\text{BWR}_{\text{cum}} = 838$. The foundry tier (E3, $L_3 = 12$) contributes the largest single-echelon ratio ($\text{BWR}_3 = 10.81$), exceeding the Chen lower bound of 1.57 for $p = 52$ because E3 faces already-amplified orders from E2 rather than end-customer demand. Fill rates at E2 (69.2\%) and E3 (67.9\%) fall below their newsvendor-optimal targets, confirming that constant forecasts are inadequate for long-pipeline chains. The summary dashboard (Fig.~\ref{fig:dashboard}) shows three notable features: the structural shock at $t = 104$ produces E3 and E4 order spikes several times larger than the demand perturbation; E4 experiences an inventory drawdown of approximately 1,000 units representing substantial working capital exposure; and the BWR profile exhibits the multiplicative cascade pattern described in Eq.~\eqref{eq:cum_bwr}.

\begin{table}
\centering
\caption{Experiment 1: Single-path results ($T = 156$, seed $= 42$, constant forecast).}\label{tab:exp1}
\begin{tabular}{clrrrr}
\toprule
Echelon & Role & $\text{BWR}_k$ & Fill Rate & Cost & $\text{BWR}_{\text{cum}}$ \\
\midrule
E1 & Distributor & 1.12  & 85.3\% & 344   & 1.1 \\
E2 & Assembly    & 1.84  & 69.2\% & 265   & 2.1 \\
E3 & Foundry     & 10.81 & 67.9\% & 1,196 & 22.3 \\
E4 & Wafer       & 37.56 & 85.9\% & 3,373 & 838.3 \\
\midrule
\multicolumn{2}{l}{\textit{Total}} & --- & --- & 5,178 & 838.3 \\
\bottomrule
\end{tabular}
\end{table}

\begin{figure}
\centering
\includegraphics[width=\textwidth]{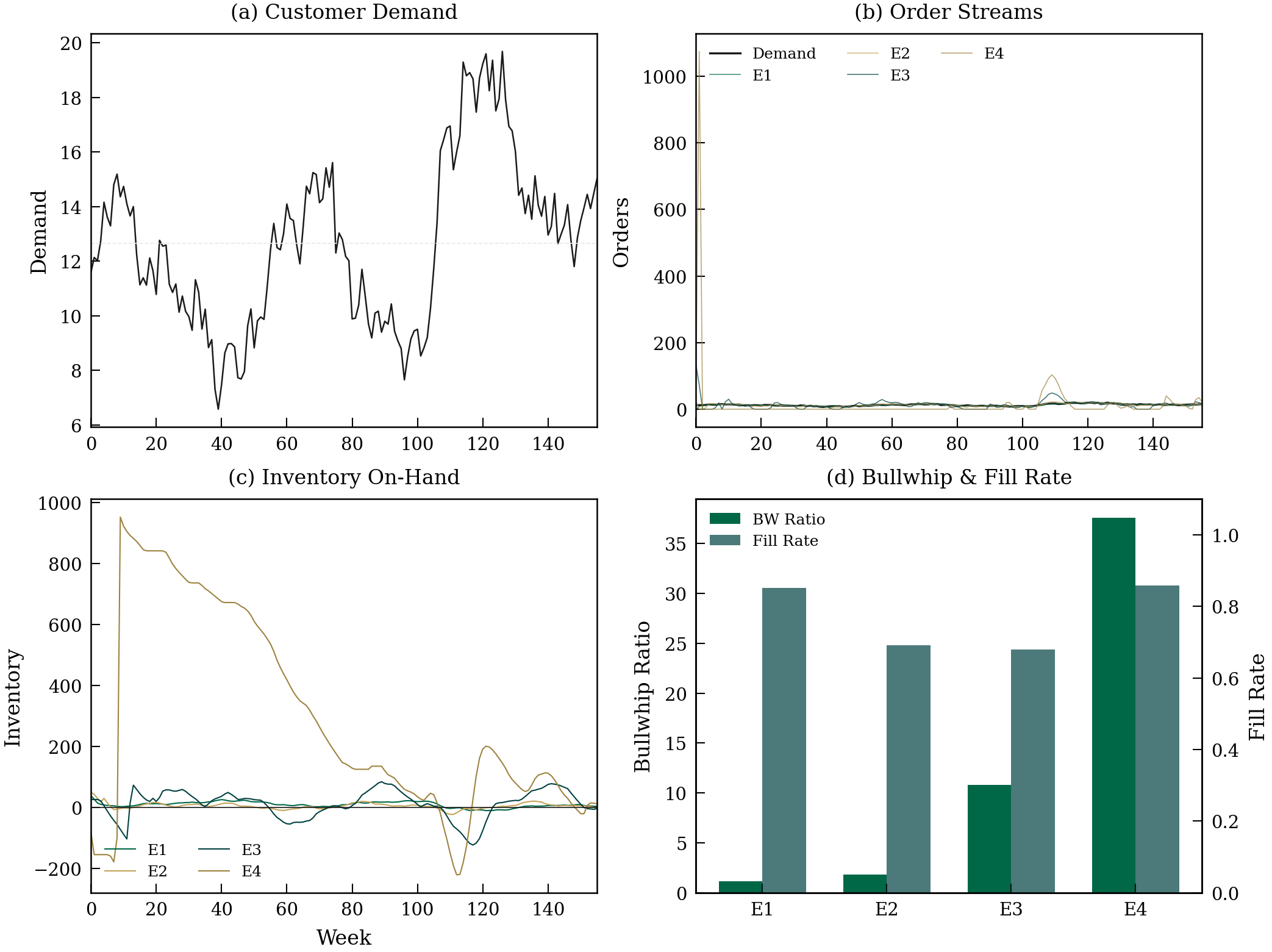}
\caption{Experiment 1 summary dashboard. (a)~AR(1) demand with structural shock at $t = 104$. (b)~Order streams showing upstream spike magnification. (c)~Inventory trajectories with ${\sim}1{,}000$-unit drawdown at E4. (d)~Per-echelon BWR and fill rate.}\label{fig:dashboard}
\end{figure}

To validate the simulator against analytical results, we ran a rigorous single-echelon experiment with i.i.d.\ Normal demand ($\mu = 100$, $\sigma = 10$), moving-average forecasting of window~$p$, and the OUT policy across $N = 2{,}000$ paths and $T = 520$ periods. Table~\ref{tab:chen_validation} compares the simulated BWR to the Chen lower bound $\text{BWR}_k \geq 1 + 2(L_k{+}1)/p + 2(L_k{+}1)^2/p^2$ (where $L_k{+}1$ is the review lead time). The simulated BWR matches the bound to within 0.14\% across all test cases, confirming that the simulator correctly implements the OUT policy dynamics under the bound's own assumptions. The small positive deviations are consistent with the finite-sample bias of the sample variance estimator.

\begin{table}[htbp]
\centering
\caption{Rigorous validation against the Chen et al.\ (2000) lower bound under i.i.d.\ demand ($N = 2{,}000$, $T = 520$). The bound is tight for i.i.d.\ demand with MA($p$) forecasting.}\label{tab:chen_validation}
\begin{tabular}{rrrrrr}
\toprule
$L_k$ & $p$ & Chen LB & Simulated & Error \\
\midrule
2  & 10  & 1.780 & 1.781 & $+$0.04\% \\
4  & 10  & 2.500 & 2.502 & $+$0.08\% \\
4  & 20  & 1.625 & 1.625 & $-$0.00\% \\
8  & 20  & 2.305 & 2.306 & $+$0.04\% \\
8  & 52  & 1.406 & 1.406 & $+$0.02\% \\
12 & 52  & 1.625 & 1.625 & $+$0.01\% \\
2  & 52  & 1.122 & 1.122 & $+$0.00\% \\
12 & 10  & 6.980 & 6.990 & $+$0.14\% \\
\bottomrule
\end{tabular}
\end{table}

For the multi-echelon semiconductor chain, the single-path simulation (Table~\ref{tab:exp1}) produces BWR that exceeds the Chen bound by increasing multiples at upstream echelons (1.6$\times$ at E2, 6.9$\times$ at E3, 27.7$\times$ at E4), because the Chen bound applies to a single echelon facing exogenous demand, whereas upstream echelons face already-amplified orders from downstream. This growing divergence is the multi-echelon compounding effect that single-tier bounds do not capture and that motivates full-chain simulation.

Note on the difference between the single-path cumulative BWR of 838 (Table~\ref{tab:exp1}) and the Monte Carlo mean of 427 reported later in the benchmarking experiments (Table~\ref{tab:bench_policy}): Experiment~1 uses a \emph{constant} forecast computed from the full demand series (which leaks future information), while the benchmark experiments use the \texttt{naive} forecaster that estimates demand statistics from the rolling history up to each period. The rolling estimator reduces over-ordering at upstream tiers, producing lower cumulative BWR. Additionally, the Monte Carlo mean averages over 1{,}000 demand realizations, whereas 838 is a single-path value. The structural shock at $t = 104$ makes single-path BWR particularly sensitive to the demand realization.

\subsubsection{Experiment 2: Stochastic filtering} 

Running $N = 1{,}000$ independent demand paths through the vectorized engine reveals an asymmetry in BWR variability across echelons. While the delta-method approximation for the CV of a ratio is a standard statistical technique, its application to cross-path BWR variability in multi-echelon supply chains appears to be new. Table~\ref{tab:exp2} reports the Monte Carlo statistics and Fig.~\ref{fig:mc_bwr} shows the distributions. Note that the E1 mean BWR of 151.0 in Table~\ref{tab:exp2} differs markedly from the single-path value of 1.12 in Table~\ref{tab:exp1} because Experiment~2 uses a \emph{global} constant forecast (the mean of all $N \times T$ demand values), which is a poor per-path predictor when individual demand realizations vary substantially around the global mean. In Experiment~1, the constant forecast is computed from the single demand realization, providing a much closer fit. This sensitivity of E1 BWR to forecast quality illustrates why the distributor tier carries the highest stochastic risk. The coefficient of variation (CV) of BWR drops from 0.21 at E1 to 0.01 at E3, indicating that the foundry's BWR is nearly deterministic regardless of demand realization.

Proposition~\ref{prop:filtering} provides a formal explanation for this phenomenon. The key mechanism is that $\text{BWR}_k = \text{Var}(O_k)/\text{Var}(O_{k-1})$ is a ratio of two random variables (varying across demand realizations) that become increasingly correlated at upstream echelons because both are driven by the same amplified demand cascade. Table~\ref{tab:filtering_validation} validates the delta-method prediction from Proposition~\ref{prop:filtering} against the empirical CV: at E3, the correlation between $\text{Var}(O_3)$ and $\text{Var}(O_2)$ reaches $\rho = 0.997$, and the predicted $\text{CV}(\text{BWR}_3) = 0.011$ matches the empirical value to four decimal places. The practical consequence is that upstream capacity planning at foundries and wafer suppliers can rely on deterministic scenario analysis parameterized by chain configuration, while the distributor tier requires Monte Carlo simulation to characterize its path-dependent risk.

To validate Corollary~\ref{cor:cumulative}, we computed the cumulative $\mathrm{BWR}_\mathrm{cum}$ across 5{,}000 independent paths. The empirical $\mathrm{CV}(\mathrm{BWR}_{\mathrm{cum}}) = 0.236$. A na\"ive independence approximation ($\sqrt{\sum \mathrm{CV}_k^2} = 0.281$) overpredicts by 19\% because adjacent echelons exhibit non-negligible cross-correlations, most notably $\rho_{34} = -0.74$ between the foundry and wafer tiers. Using the full covariance formula in Eq.~\eqref{eq:cv_cum} with empirically estimated $\rho_{ij}$ yields a prediction of 0.232, matching the empirical value within 1.8\%. The negative correlation between E3 and E4 arises because the demand cascade mechanically links their order variances: a demand realization that inflates the foundry's amplification tends to compress the wafer tier's ratio, and vice versa. 

Additionally, the total supply chain cost exhibits $\mathrm{CV}(\mathrm{TC}) = 0.079$, well below $\mathrm{CV}(\mathrm{BWR}_{\mathrm{E1}}) = 0.217$. This cost concentration effect arises because the majority of cost is incurred at the upstream tiers (E3 and E4 account for 88\% of total cost in Table~\ref{tab:exp1}), where the stochastic filtering phenomenon makes BWR near-deterministic. Consequently, the cross-path variability of total cost is far lower than the distributor-level bullwhip variability would suggest.

\begin{table}[htbp]
\centering
\caption{Validation of the stochastic filtering prediction (Proposition~\ref{prop:filtering}). $\rho$ is the cross-path correlation between $\text{Var}(O_k)$ and $\text{Var}(O_{k-1})$.}\label{tab:filtering_validation}
\resizebox{\linewidth}{!}{
\begin{tabular}{clrrrcc}
\toprule
Echelon & Role & $\text{CV}_X$ & $\text{CV}_Y$ & $\rho$ & Predicted CV & Empirical CV \\
\midrule
E2 & Assembly & 0.146 & 0.160 & 0.398 & 0.168 & 0.167 \\
E3 & Foundry  & 0.145 & 0.146 & 0.997 & 0.011 & 0.011 \\
E4 & Wafer    & 0.154 & 0.145 & 0.987 & 0.026 & 0.026 \\
\bottomrule
\end{tabular}
}
\end{table}

\begin{table}
\centering
\caption{Experiment 2: Monte Carlo BWR statistics ($N = 1{,}000$ paths).}\label{tab:exp2}
\begin{tabular}{clrrrr}
\toprule
Echelon & Role & Mean & Median & Std & CV \\
\midrule
E1 & Distributor & 151.0 & 149.6 & 32.4 & 0.21 \\
E2 & Assembly    & 18.0  & 17.9  & 3.0  & 0.17 \\
E3 & Foundry     & 8.1   & 8.1   & 0.1  & 0.01 \\
E4 & Wafer       & 22.2  & 22.2  & 0.6  & 0.03 \\
\bottomrule
\end{tabular}
\end{table}

\begin{figure}
\centering
\includegraphics[width=1.1\textwidth]{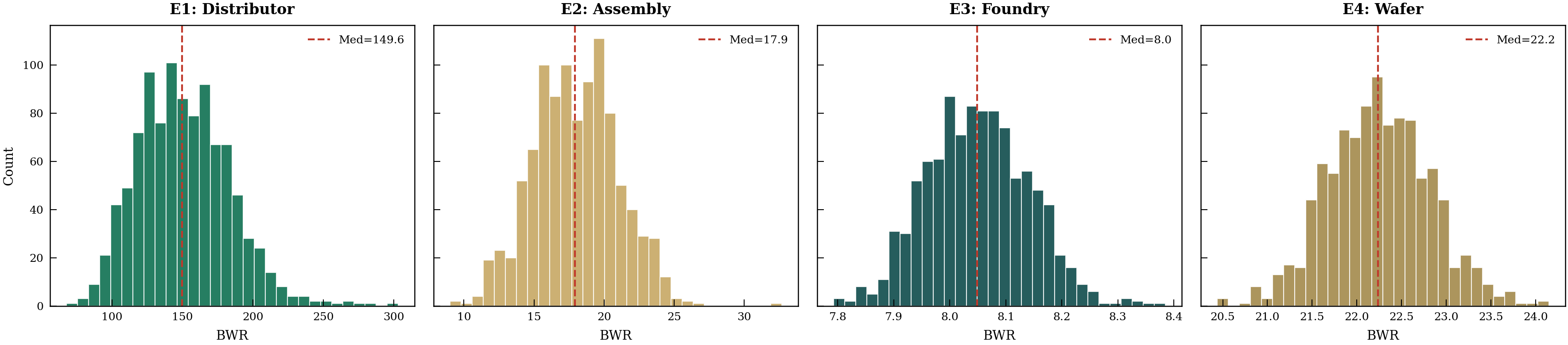}
\caption{BWR distributions across 1,000 Monte Carlo paths. E1 shows high stochastic variability; E3 converges to a narrow band around 8.05, illustrating the stochastic filtering phenomenon.}\label{fig:mc_bwr}
\end{figure}

\subsubsection{Experiment 3: Lead time and cost sensitivity} 

Three lead time scenarios---Short ($L = [1, 2, 4, 2]$), Baseline ($[2, 4, 12, 8]$), and Long ($[4, 8, 20, 12]$)---produce cumulative BWR spanning three orders of magnitude: 6.8, 838, and $1.8 \times 10^5$ (Fig.~\ref{fig:leadtime}). This super-exponential sensitivity has a direct strategic implication: reducing foundry lead time from 12 to 8 weeks (33\%) decreases cumulative BWR from 838 to approximately 90 (an order-of-magnitude improvement), whereas eliminating bullwhip entirely at E1 reduces it from 838 to 748 (11\%). We also vary the cost ratio $b/h$ from 2 to 10 under normalized costs ($h + b = 1$); higher ratios shift the cost profile from frequent holding costs to rare but extreme stockout events (Fig.~\ref{fig:cost_ratio} in Appendix~\ref{app:sensitivity}).

\begin{figure}
\centering
\includegraphics[width=0.7\textwidth]{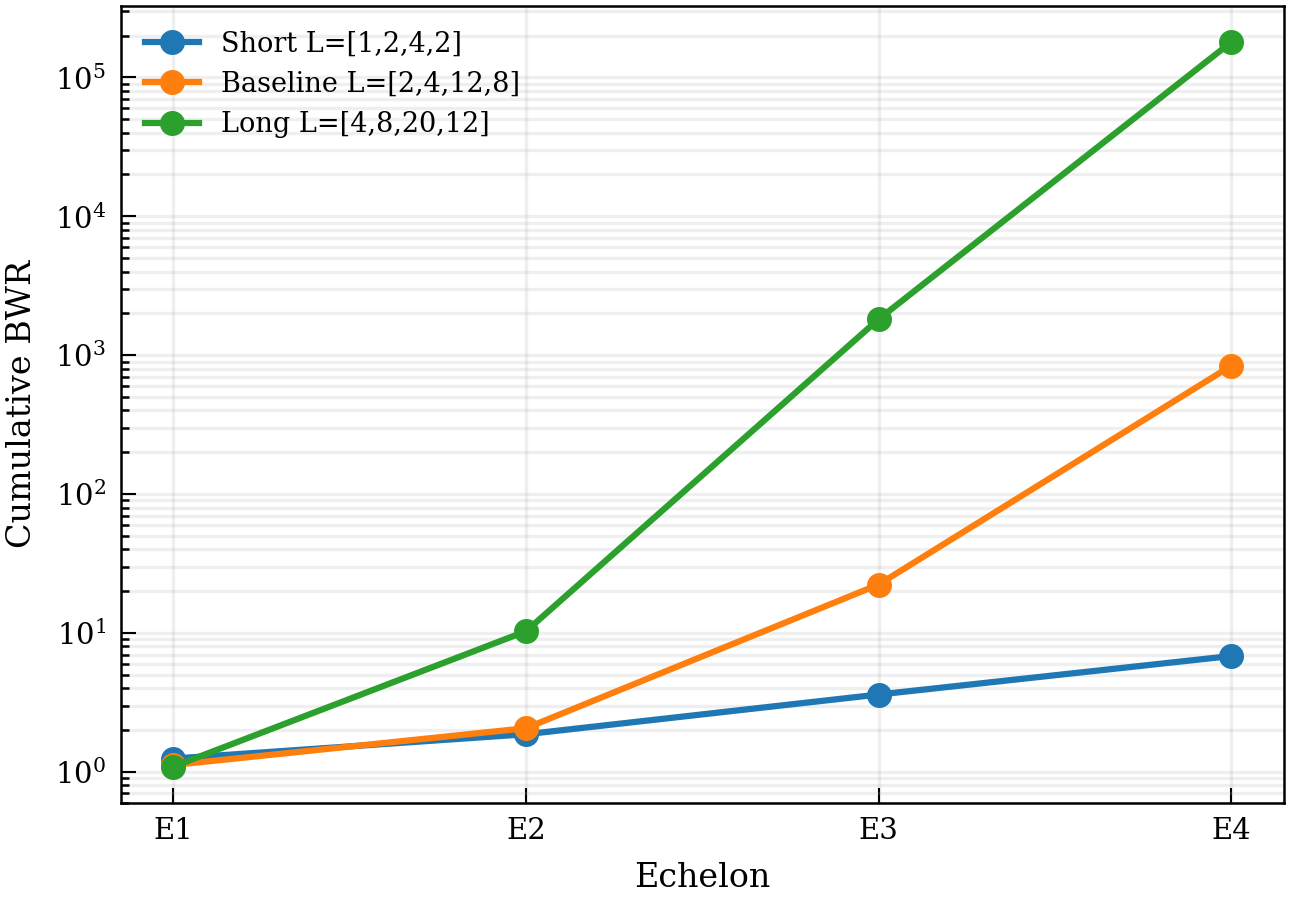}
\caption{Cumulative BWR vs.\ echelon for three lead time scenarios (logarithmic $y$-axis).}\label{fig:leadtime}
\end{figure}

\subsection{Computational scalability}\label{sec:exp_scalability}

The second set of experiments are scalability experiments that ablate the number of Monte Carlo paths $N$, the time horizon $T$, and the chain depth $K$. At the largest scale, it could simulate a realistic supply chain with 5,000 paths, 520 time periods, and 8 echelons in 6.8 seconds with 476\,MB peak memory on a single CPU core. In that regard, the vectorized engine maintains 50 to 90 times speedup over the serial engine across the tested ranges of Monte Carlo paths $N \in [10, 5{,}000]$, time horizon $T \in [52, 5{,}200]$ weeks, and chain depth $K \in [2, 16]$ echelons (Fig.~\ref{fig:scalability}). Both engines scale linearly in each dimension, confirming $O(NKT)$ complexity. Detailed timing tables are provided in Appendix~\ref{app:scalability}.

\begin{figure}
\centering
\includegraphics[width=1.1\textwidth]{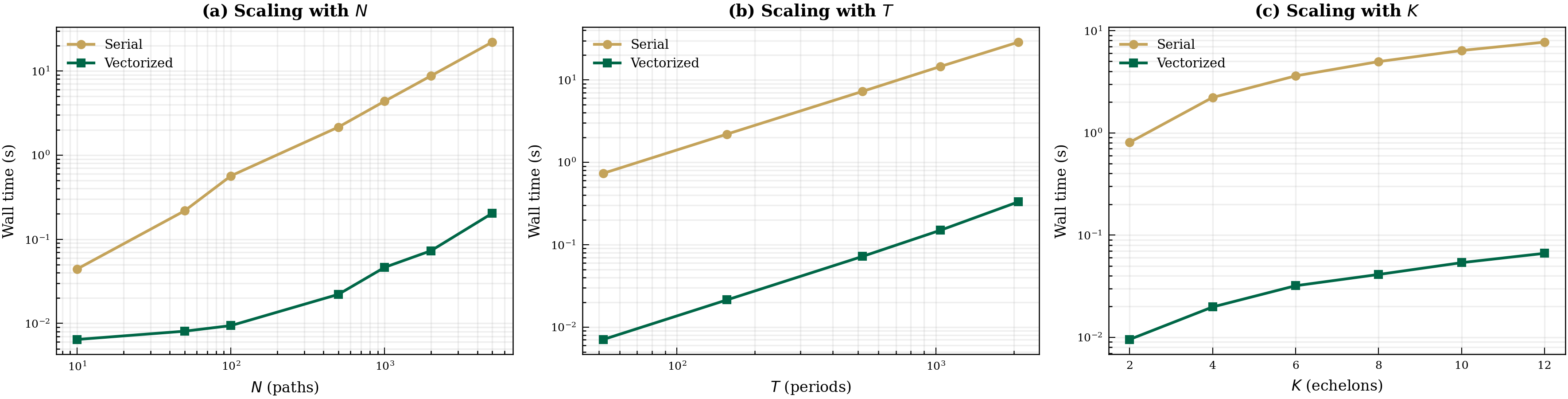}
\caption{Computational scalability: serial vs.\ vectorized engine across (a)~paths $N$, (b)~horizon $T$, and (c)~echelons $K$.}\label{fig:scalability}
\end{figure}

\subsection{Benchmark experiments}\label{sec:exp_benchmark}

The following four experiments demonstrate the benchmarking framework from Section~\ref{sec:benchmark}. Each is executed through a single call to \texttt{BenchmarkRunner.run()} and produces the standardized DataFrame described in Section~\ref{sec:runner}.

\subsubsection{Experiment 5a: Policy comparison.} 

Table~\ref{tab:bench_policy} compares four ordering policies on the semiconductor chain with $N = 1{,}000$ paths. The standard OUT policy produces cumulative BWR of 427 and total cost of 1,743 at E4. POUT with $\alpha = 0.3$ achieves a 96\% reduction in cumulative BWR (to 16.3) but at a heavy cost: fill rate collapses to near zero at E1 and total cost at E1 rises from 293 to 2,377, dominated by backorder penalties. Smoothing OUT reduces BWR more moderately (to 258) while maintaining comparable fill rates and even improving E4 fill rate to 88\%, but it produces dramatically higher NSAmp at E4 (12,952 vs.\ 3,189), meaning order smoothing shifts variance from orders to inventory, consistent with the theoretical analysis of \citet{Disney2003}. The Constant Order policy eliminates bullwhip by construction but produces near-zero fill rates at E3 and E4, confirming that ignoring demand signals is not a viable strategy. These results illustrate why multi-metric evaluation is essential: POUT appears superior on BWR alone but inferior on fill rate and upstream cost, and no single metric captures the full tradeoff (Fig.~\ref{fig:policy}).

\begin{table}
\centering
\caption{Experiment 5a: Policy comparison at E4 (Wafer Supplier), $N = 1{,}000$ paths.}\label{tab:bench_policy}
\begin{tabular}{lrrrr}
\toprule
Policy & Cum.\ BWR & NSAmp & Fill Rate & Total Cost \\
\midrule
OUT & 426.9 & 3,189 & 80\% & 1,743 \\
POUT ($\alpha{=}0.3$) & 16.3 & 407 & 50\% & 1,458 \\
Smoothing OUT & 257.5 & 12,952 & 88\% & 4,028 \\
Constant Order & 0.0 & 17 & 3\% & 2,262 \\
\bottomrule
\end{tabular}
\end{table}

\begin{figure}
\centering
\includegraphics[width=\textwidth]{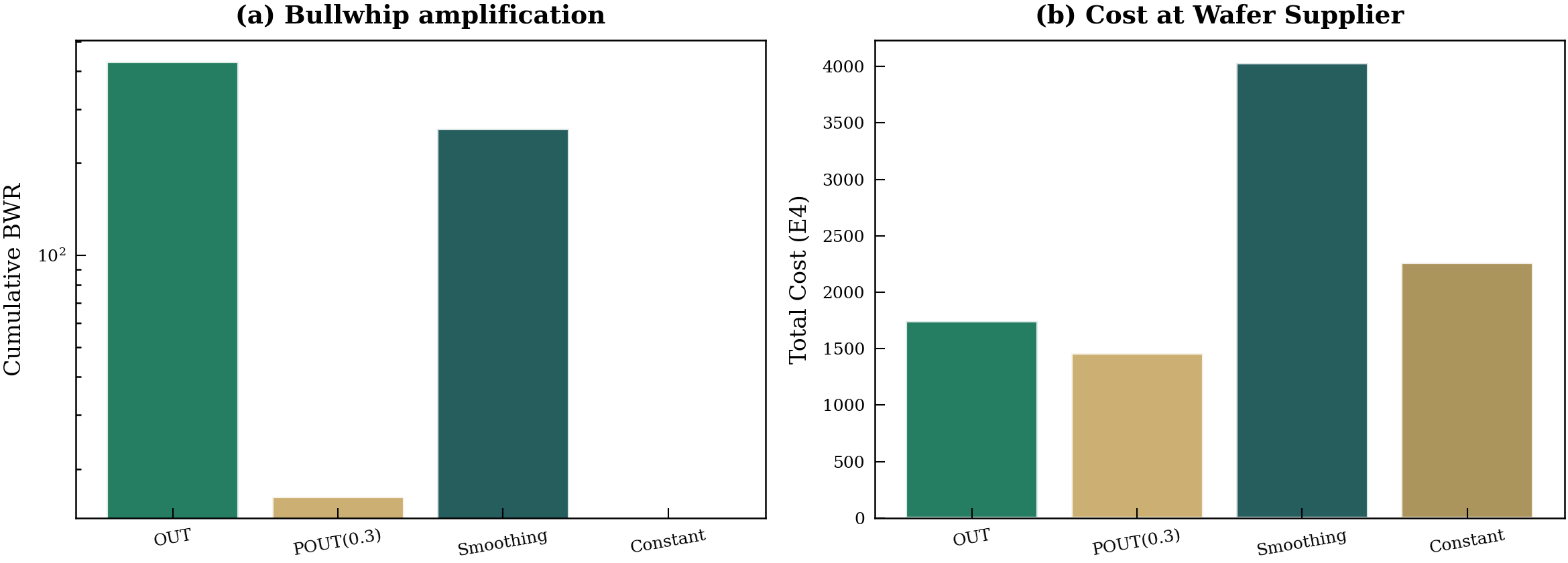}
\caption{Policy comparison at E4: (a)~cumulative BWR (logarithmic scale) and (b)~total cost.}\label{fig:policy}
\end{figure}

To characterize the full BWR--NSAmp tradeoff, we sweep the POUT smoothing parameter $\alpha$ from 0.1 to 1.0 in increments of 0.1 ($N = 500$ paths). Figure~\ref{fig:pareto} shows the resulting Pareto frontier at E4. As $\alpha$ decreases from 1.0 (standard OUT), cumulative BWR drops from 427 to 3.1 at $\alpha = 0.1$, but the tradeoffs are non-trivial: NSAmp first decreases from 3,189 to a minimum of 310 at $\alpha = 0.2$, then increases again for lower $\alpha$, revealing a non-monotonic inventory variance response. Fill rate degrades sharply below $\alpha = 0.4$ (from 61\% to 15\% at $\alpha = 0.1$), and total cost is minimized around $\alpha = 0.5$. This frontier quantifies the managerial tradeoff space and demonstrates that the benchmarking framework can identify optimal operating regions that discrete policy comparisons would miss.

\begin{figure}
\centering
\includegraphics[width=\textwidth]{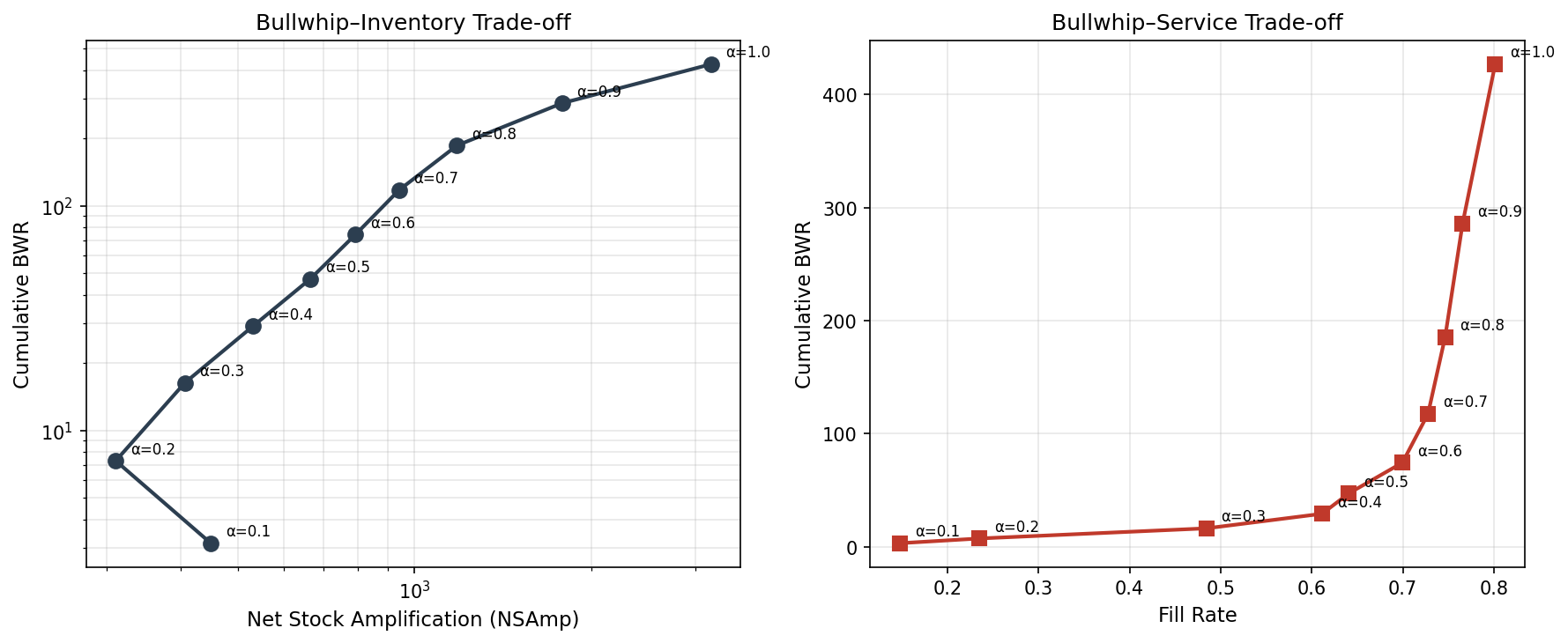}
\caption{POUT smoothing parameter tradeoff at E4: (a)~cumulative BWR vs.\ NSAmp (log-log), and (b)~cumulative BWR vs.\ fill rate. Labels indicate $\alpha$ values.}\label{fig:pareto}
\end{figure}

\subsubsection{Experiment 5a$'$: Cost asymmetry--policy interaction} 

The results above use a fixed cost configuration (Table~\ref{tab:config}). To test whether policy rankings are robust to cost structure, we evaluate three policies under normalized costs ($h + b = 1$) at four backorder-to-holding ratios $b/h \in \{2, 5, 10, 20\}$ ($N = 500$). Table~\ref{tab:cost_policy} reports total cost at E4. At low asymmetry ($b/h = 2$), POUT dominates with the lowest cost (9,423) because its dampened orders reduce inventory holding. As the backorder penalty rises, the aggressive demand-chasing of OUT becomes optimal ($b/h \geq 5$), and at $b/h = 20$, the Smoothing OUT policy overtakes POUT. This ranking reversal demonstrates that no single policy is universally optimal: the choice depends on the cost structure, and the benchmarking framework enables systematic exploration of this interaction.

\begin{table}
\centering
\caption{Total cost at E4 under three policies and four cost asymmetry levels ($N = 500$, normalized $h + b = 1$).}\label{tab:cost_policy}
\begin{tabular}{lrrrr}
\toprule
& \multicolumn{4}{c}{$b/h$ ratio} \\
\cmidrule(lr){2-5}
Policy & 2 & 5 & 10 & 20 \\
\midrule
OUT                   & 12{,}332 & 8{,}525 & 6{,}795 & 5{,}806 \\
POUT ($\alpha{=}0.5$) &  9{,}423 & 9{,}297 & 9{,}240 & 9{,}208 \\
Smoothing OUT         & 29{,}694 & 17{,}251 & 11{,}594 & 8{,}362 \\
\bottomrule
\end{tabular}
\end{table}

\subsubsection{Experiment 5b: Forecaster comparison} 

Table~\ref{tab:bench_fc} compares four forecasting methods under the OUT policy ($N = 1{,}000$ paths), now including the DeepAR probabilistic forecaster \citep{Salinas2020deepar}. Moving Average ($p = 10$) and Exponential Smoothing ($\alpha = 0.3$) both improve fill rate at E4 relative to the na\"ive forecast (from 80\% to 82\% and 83\%) but slightly increase cumulative BWR (from 427 to 446 and 443) and total cost (from 1,743 to 1,960 and 1,908). DeepAR, trained on 200 synthetic demand paths with a global model across all series, achieves the lowest total cost (1,718) among all forecasters while maintaining comparable BWR (428.0) and fill rate (80\%). The neural forecaster's advantage comes from learning cross-series demand patterns that reduce order variability at upstream echelons, though at the expense of a lower fill rate at E1 (50\% vs.\ 70--80\% for classical methods). This pattern confirms the accuracy--cost disconnect of \citet{Ban2019} in a multi-echelon setting: the choice of forecaster cannot be evaluated on point accuracy or on BWR alone but must be assessed through its downstream cost consequence across the full chain.

\begin{table}
\centering
\caption{Experiment 5b: Forecaster comparison at E4, OUT policy, $N = 1{,}000$ paths.}\label{tab:bench_fc}
\begin{tabular}{lrrrr}
\toprule
Forecaster & Cum.\ BWR & Fill Rate & Total Cost \\
\midrule
Na\"ive (constant) & 426.9 & 80\% & 1,743 \\
Moving Avg.\ ($p{=}10$) & 446.4 & 82\% & 1,960 \\
Exp.\ Smoothing ($\alpha{=}0.3$) & 442.6 & 83\% & 1,908 \\
DeepAR & 428.0 & 80\% & 1,718 \\
\bottomrule
\end{tabular}
\end{table}

\begin{figure}
\centering
\includegraphics[width=\textwidth]{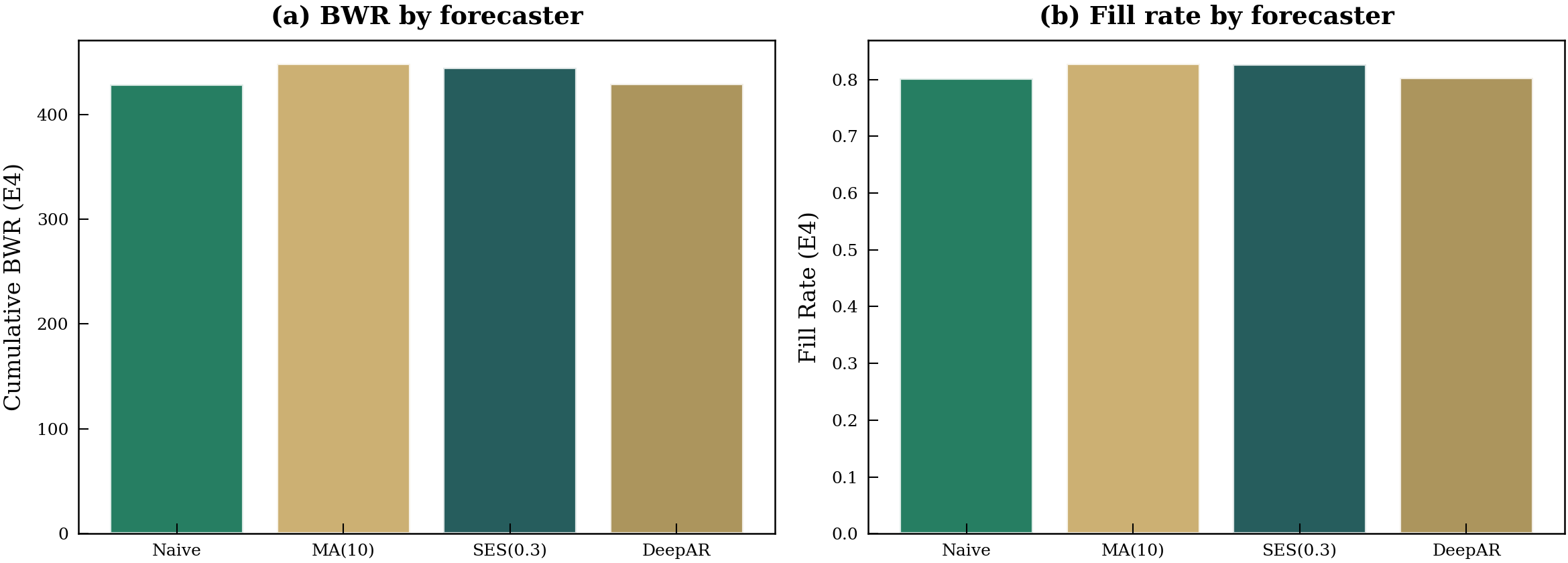}
\caption{Forecaster comparison at E4 under OUT policy ($N = 1{,}000$): (a)~cumulative BWR and (b)~fill rate. DeepAR achieves the lowest total cost while maintaining comparable BWR and fill rate to classical methods.}\label{fig:forecaster}
\end{figure}

\subsubsection{Experiment 5c: Cross-chain comparison} 

Applying the OUT policy across three predefined configurations ($N = 500$) reveals that chain structure drives bullwhip severity independently of the ordering policy. The semiconductor chain produces cumulative BWR of 429, the Beer Game chain 10.0, and the consumer two-tier chain 2.5 (Table~\ref{tab:bench_chain}). The 43-fold difference between the semiconductor and Beer Game chains arises entirely from lead time heterogeneity ($L = [2,4,12,8]$ vs.\ uniform $L = 2$), confirming the quadratic lead time dependence of Eq.~\eqref{eq:out_level} compounded through the multiplicative cascade of Eq.~\eqref{eq:cum_bwr}.

\begin{table}
\centering
\caption{Experiment 5c: Cross-chain comparison, OUT policy, $N = 500$ paths.}\label{tab:bench_chain}
\begin{tabular}{lcrr}
\toprule
Chain configuration & Echelons & Cum.\ BWR & Total Cost \\
\midrule
Semiconductor 4-tier & 4 & 429 & 3,560 \\
Beer Game 4-tier & 4 & 10.0 & 1,193 \\
Consumer 2-tier & 2 & 2.5 & 483 \\
\bottomrule
\end{tabular}
\end{table}

\subsubsection{Experiment 5d: Real versus synthetic demand} 

The most consequential benchmark result comes from replaying 60 months of WSTS semiconductor billings through the chain using the \texttt{ReplayDemandGenerator} ($N = 500$ paths). Each path replays the historical series with independent 5\% Gaussian noise added to generate path diversity for Monte Carlo evaluation; a single noiseless path produces qualitatively similar results. Under the OUT policy, real demand produces cumulative BWR of 66,076, compared with 429 under synthetic AR(1), a gap of approximately $155\times$ (Table~\ref{tab:bench_real}, Fig.~\ref{fig:real}). This gap arises because the WSTS data exhibits higher autocorrelation, regime-switching behavior (the 2019 trade-war downturn, the 2020--2021 pandemic surge), and structural breaks that the stationary AR(1) model does not capture. The POUT policy with $\alpha = 0.5$ reduces the real-data BWR from 66,076 to 857, confirming its effectiveness under realistic conditions. But the magnitude of the gap itself is the central finding: conclusions drawn from synthetic benchmarks may underestimate operational bullwhip risk in semiconductor supply chains by two orders of magnitude, motivating the inclusion of real-data replay capabilities in the benchmarking framework.

\begin{table}
\centering
\caption{Experiment 5d: Cumulative BWR and total cost at E4 under synthetic AR(1) vs.\ real WSTS demand ($N = 500$).}\label{tab:bench_real}
\begin{tabular}{lrrrr}
\toprule
& \multicolumn{2}{c}{Cum.\ BWR} & \multicolumn{2}{c}{Total Cost} \\
\cmidrule(lr){2-3}\cmidrule(lr){4-5}
Policy & Synthetic & WSTS & Synthetic & WSTS \\
\midrule
OUT & 429 & 66,076 & 3,560 & 34,892 \\
POUT ($\alpha{=}0.5$) & 16 & 857 & 8,113 & 8,088 \\
\bottomrule
\end{tabular}
\end{table}

\begin{figure}
\centering
\includegraphics[width=0.7\textwidth]{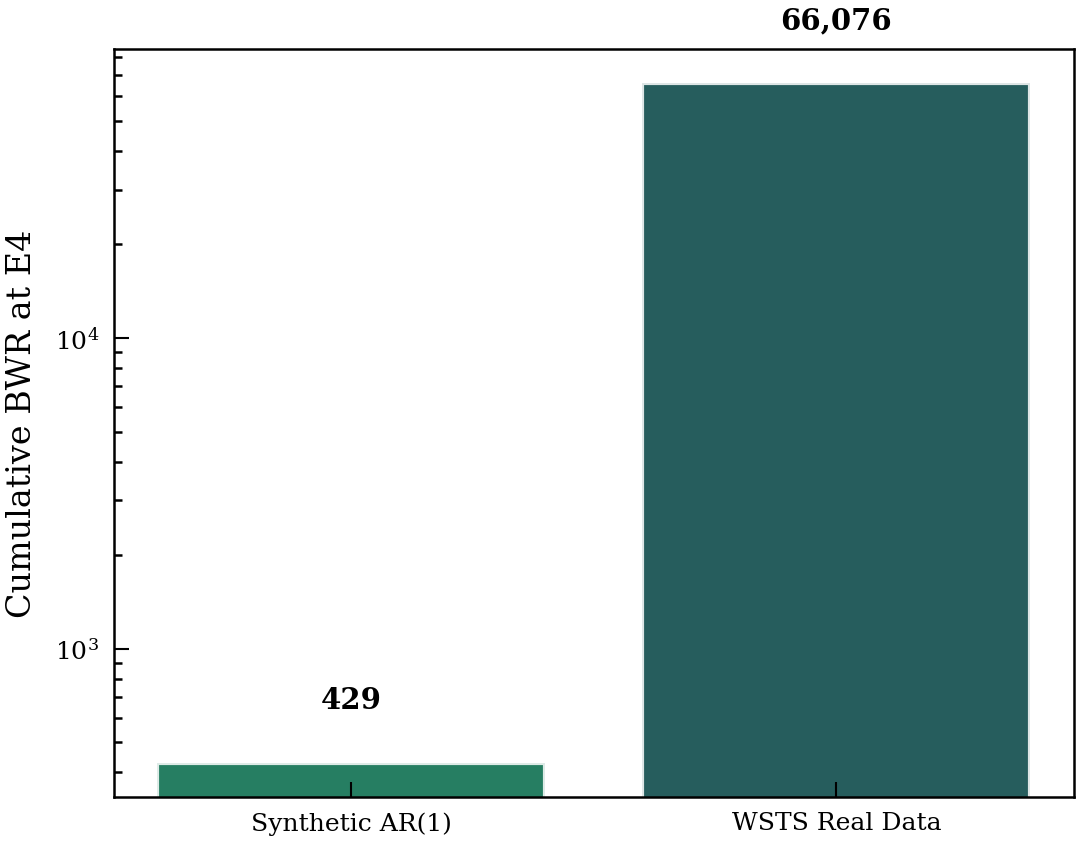}
\caption{Cumulative BWR at E4 under OUT: synthetic AR(1) vs.\ real WSTS demand. The $155\times$ gap illustrates the importance of empirical benchmarking.}\label{fig:real}
\end{figure}

\section{Discussion}\label{sec:discussion}

We discuss the experimental findings and their implications and identify the limitations of the present work together with directions for future research.

\subsection{Managerial implications}\label{sec:disc_managerial}

The experimental findings carry four implications, especially for semiconductor supply chain managers.

First, lead time compression at the bottleneck tier yields disproportionate returns. Reducing foundry lead time from 12 to 8 weeks (a 33\% reduction in a single parameter) produces an order-of-magnitude improvement in cumulative BWR, whereas eliminating bullwhip entirely at the distributor tier reduces it by only 11\%. This asymmetry supports the semiconductor industry's ongoing investments in advanced packaging and regionalized fabrication capacity \citep{SIA2025}, both of which shorten the effective pipeline without requiring changes to downstream ordering practices. Second, the stochastic filtering result (Proposition~\ref{prop:filtering}) and its cumulative extension (Corollary~\ref{cor:cumulative}) imply that upstream capacity planning at foundries and wafer suppliers can rely on deterministic scenario analysis parameterized by chain configuration, without requiring computationally expensive Monte Carlo simulation. The cumulative amplification itself concentrates ($\mathrm{CV}(\mathrm{BWR}_{\mathrm{cum}}) = 0.236$), and total supply chain cost concentrates even more strongly ($\mathrm{CV}(\mathrm{TC}) = 0.079$), because the majority of cost is incurred at the near-deterministic upstream tiers. The distributor tier, by contrast, faces path-dependent risk (CV of BWR $= 0.21$) that cannot be characterized without stochastic evaluation. This asymmetry suggests that different planning methodologies are appropriate at different supply chain tiers.

Next, the BWR--NSAmp tradeoff revealed by the policy comparison (Table~\ref{tab:bench_policy}) and the POUT Pareto frontier (Fig.~\ref{fig:pareto}) implies that practitioners should not adopt smoothing policies without evaluating their inventory variance consequences across the full chain. POUT ($\alpha = 0.3$) reduces cumulative BWR by 96\% but collapses the fill rate to near zero at E1; Smoothing OUT preserves fill rates but quadruples E4 inventory variance. The cost-asymmetry experiment (Table~\ref{tab:cost_policy}) further demonstrates that policy rankings are not robust to the cost structure: POUT dominates at low backorder-to-holding ratios ($b/h = 2$), but OUT becomes optimal at $b/h \geq 5$, and Smoothing OUT overtakes POUT at $b/h = 20$. The choice of $\alpha$ and the choice of policy class are both managerial decisions that the benchmarking framework supports directly through systematic sensitivity analysis. Finally, the 155-fold gap between synthetic AR(1) and real WSTS bullwhip severity carries a cautionary message: inventory planning tools validated on synthetic demand may produce substantially optimistic risk assessments when applied to real semiconductor demand with its regime-switching dynamics. The availability of real-data replay in the benchmarking framework allows practitioners to validate ordering policies against historical demand before deployment.

\subsection{Limitations and future research}\label{sec:disc_limitations}

The experiments in this paper use serial chain topologies, though the package has been extended to support convergent and divergent structures via \texttt{SupplyChainGraph} and \texttt{NetworkSupplyChain}, supporting arbitrary directed acyclic graph topologies with NetworkX integration. Evaluating the benchmarking framework on non-serial topologies is a natural next step. The current implementation assumes unlimited upstream capacity, precluding study of the rationing game \citep{Lee1997}. Demand models in the catalog are univariate; multi-product demand with substitution effects would require extending the demand generator ABC to produce multivariate time series. The vectorized engine's $O(NKT)$ memory scaling limits single-machine experiments to approximately $N = 50{,}000$ and $K = 20$; larger configurations would require chunked processing. The DeepAR probabilistic forecaster \citep{Salinas2020deepar} has been integrated as a first deep learning baseline, demonstrating that the registry architecture supports neural forecasters without code modification. Expanding the catalog further to include deep reinforcement learning policies \citep{Boute2022} and end-to-end newsvendor methods \citep{Oroojlooyjadid2020} would increase the framework's value as a community resource. Because the registry architecture supports extension without code modification, these additions can be contributed incrementally.

\section{Conclusion}\label{sec:conclusion}

The bullwhip effect remains operationally persistent in part because the computational infrastructure for studying it has not kept pace with advances in analytical theory. This paper introduced \texttt{deepbullwhip}, an open-source Python package comprising a modular simulation engine with pluggable demand generators, ordering policies, cost functions, and forecasters, together with a vectorized Monte Carlo engine (20.8 million cells in under 7 seconds) and a registry-based benchmarking framework shipping a curated catalog of components and demand datasets. Five sets of experiments on a four-echelon semiconductor supply chain produced four principal insights: cumulative amplification of $427\times$ arising from a multiplicative cascade across echelons; a stochastic filtering phenomenon whereby the cross-path CV of BWR drops from 0.21 at the distributor to 0.01 at the foundry, formally explained by Proposition~\ref{prop:filtering} and extended to the cumulative BWR by Corollary~\ref{cor:cumulative} (which predicts $\mathrm{CV}(\mathrm{BWR}_\mathrm{cum})$ within 1.8\%); a quantitative BWR--NSAmp tradeoff whose Pareto frontier reveals non-monotonic inventory variance responses to order smoothing and whose policy rankings reverse under different cost asymmetries; and a 155-fold gap between synthetic and real-data bullwhip severity that underscores the importance of empirical benchmarking. The simulator's correctness is confirmed by matching the Chen et al.\ (2000) analytical bound to within 0.14\% under the bound's own assumptions. Future work will evaluate the recently added non-serial topologies (available from v0.3.0) against the benchmarking framework, add capacity constraints and multi-product demand, and expand the catalog through community contributions. The package is available at \url{https://github.com/ai-vnv/deepbullwhip} with full documentation and leaderboard to encourage community contributions.



\section*{Acknowledgment}

This work was supported by the Interdisciplinary Research Center for Smart Mobility \& Logistics (IRC-SML) at King Fahd University of Petroleum \& Minerals (KFUPM).







 \bibliographystyle{elsarticle-num-names} 
 \bibliography{reference}







\appendix
\newpage

\section{Proof of Proposition~\ref{prop:filtering}}\label{app:proof_filtering}

\begin{proof}
Let $X = \mathrm{Var}(O_k \mid \omega)$ and $Y = \mathrm{Var}(O_{k-1} \mid \omega)$ be positive random variables (over the probability space of demand realizations $\omega$) with means $\mu_X = \mathbb{E}[X]$, $\mu_Y = \mathbb{E}[Y]$, standard deviations $\sigma_X$, $\sigma_Y$, and correlation $\rho_{XY}$. Define $g(X,Y) = X/Y$. By the multivariate delta method, for a twice-differentiable function $g$ evaluated at the mean $(\mu_X, \mu_Y)$:
\begin{equation}\label{eq:delta_var}
\mathrm{Var}\!\bigl(g(X,Y)\bigr) \;\approx\; \nabla g^\top \Sigma \, \nabla g,
\end{equation}
where $\nabla g = \bigl(\partial g/\partial X,\; \partial g/\partial Y\bigr)^\top$ evaluated at $(\mu_X, \mu_Y)$, and $\Sigma$ is the covariance matrix of $(X, Y)$.

Computing the partial derivatives of $g(X,Y) = X/Y$:
\[
\frac{\partial g}{\partial X}\bigg|_{(\mu_X,\mu_Y)} = \frac{1}{\mu_Y}, \qquad
\frac{\partial g}{\partial Y}\bigg|_{(\mu_X,\mu_Y)} = -\frac{\mu_X}{\mu_Y^2}.
\]
Substituting into Eq.~\eqref{eq:delta_var} with $\Sigma_{11} = \sigma_X^2$, $\Sigma_{22} = \sigma_Y^2$, $\Sigma_{12} = \rho_{XY}\sigma_X\sigma_Y$:
\begin{align}
\mathrm{Var}\!\left(\frac{X}{Y}\right) &\approx \frac{\sigma_X^2}{\mu_Y^2} + \frac{\mu_X^2 \sigma_Y^2}{\mu_Y^4} - \frac{2\mu_X \rho_{XY}\sigma_X\sigma_Y}{\mu_Y^3}. \label{eq:var_expand}
\end{align}
Dividing both sides by $\bigl(\mathbb{E}[g]\bigr)^2 \approx \mu_X^2/\mu_Y^2$ and writing $\mathrm{CV}(X) = \sigma_X/\mu_X$ and $\mathrm{CV}(Y) = \sigma_Y/\mu_Y$:
\begin{equation}
\mathrm{CV}\!\left(\frac{X}{Y}\right)^2 \;\approx\; \mathrm{CV}(X)^2 + \mathrm{CV}(Y)^2 - 2\,\rho_{XY}\,\mathrm{CV}(X)\,\mathrm{CV}(Y). \label{eq:cv_squared}
\end{equation}
Taking the square root yields Eq.~\eqref{eq:cv_ratio}. When $\mathrm{CV}(X) = \mathrm{CV}(Y) = c$ (which holds approximately in the semiconductor chain, where both order streams are amplified by the same cascade), the expression simplifies to
\[
\mathrm{CV}\!\left(\frac{X}{Y}\right) \;\approx\; c\,\sqrt{2(1 - \rho_{XY})},
\]
which tends to zero as $\rho_{XY} \to 1$, regardless of $c$. The approximation is first-order accurate in the CVs, requiring $\mathrm{CV}(X), \mathrm{CV}(Y) \ll 1$; in the semiconductor chain configuration, the empirical CVs of $\mathrm{Var}(O_k)$ are approximately 0.15, and the prediction matches empirical values to four decimal places (Table~\ref{tab:filtering_validation}).
\end{proof}

\section{Proof of Corollary~\ref{cor:cumulative}}\label{app:proof_cumulative}

\begin{proof}
Let $Z_k = \mathrm{BWR}_k$ for $k = 1,\ldots,K$, and define $g(Z_1,\ldots,Z_K) = \prod_{k=1}^K Z_k$. Evaluating at the means $\boldsymbol{\mu} = (\mu_1,\ldots,\mu_K)$:
\[
\frac{\partial g}{\partial Z_j}\bigg|_{\boldsymbol{\mu}} = \prod_{k \neq j} \mu_k = \frac{\mu_{\mathrm{cum}}}{\mu_j}, \quad \text{where } \mu_{\mathrm{cum}} = \prod_{k=1}^K \mu_k.
\]
By the multivariate delta method, $\mathrm{Var}(g) \approx \nabla g^\top \Sigma \nabla g$, which yields
\[
\mathrm{Var}\!\bigl(\mathrm{BWR}_{\mathrm{cum}}\bigr) \;\approx\; \sum_{j=1}^K \frac{\mu_{\mathrm{cum}}^2}{\mu_j^2}\,\sigma_j^2 + 2\!\!\sum_{1 \le i < j \le K}\!\! \frac{\mu_{\mathrm{cum}}^2}{\mu_i\,\mu_j}\,\mathrm{Cov}(Z_i, Z_j).
\]
Dividing by $\mathbb{E}[\mathrm{BWR}_{\mathrm{cum}}]^2 \approx \mu_{\mathrm{cum}}^2$ and writing $\mathrm{CV}(Z_k) = \sigma_k/\mu_k$ gives Eq.~\eqref{eq:cv_cum}. The dominance by the first $k^*{-}1$ terms follows because $\mathrm{CV}(\mathrm{BWR}_k) \approx 0$ for $k \ge k^*$ drives all terms involving those indices to zero.
\end{proof}

\section{Algorithm pseudocode}\label{app:algorithms}

\begin{algorithm}[htbp]
\caption{Model registry: registration and retrieval}\label{alg:registry}
\begin{algorithmic}[1]
\State $\mathcal{R} \gets$ empty dictionary of dictionaries \Comment{Global registry}
\Procedure{Register}{category, name, class}
  \State $\mathcal{R}[\text{category}][\text{name}] \gets \text{class}$
  \State Attach metadata $(\text{category}, \text{name})$ to class
\EndProcedure
\Procedure{Get}{category, name, params}
  \State $\text{cls} \gets \mathcal{R}[\text{category}][\text{name}]$
  \State \Return $\text{cls}(\text{params})$ \Comment{Instantiate with keyword arguments}
\EndProcedure
\end{algorithmic}
\end{algorithm}

\newpage

\section{Code examples}\label{app:code}

Listing~\ref{lst:register} illustrates the custom decorator pattern described in Section~\ref{sec:registry}. 

\begin{lstlisting}[caption={Registering a custom Proportional OUT policy.},label={lst:register}]
from deepbullwhip.registry import register
from deepbullwhip.policy.base import OrderingPolicy

@register("policy", "my_pout")
class MyPOUT(OrderingPolicy):
    def __init__(self, lead_time, service_level=0.95,
                 alpha=0.5):
        self.alpha = alpha
        # ... (store lead_time, compute z_alpha)

    def compute_order(self, ip, fm, fs):
        S = (self.lead_time + 1) * fm + (
            self.z_alpha * fs * (self.lead_time+1)**0.5)
        return max(0.0, (S - ip) * self.alpha)
\end{lstlisting}






Listing~\ref{lst:extend} demonstrates extensibility through three use cases: replaying historical demand, implementing a custom cost function with a penalty, and running a minimal simulation.

\begin{lstlisting}[caption={Extensibility examples: custom demand replay, cost, and minimal simulation.},label={lst:extend}]
# --- Replay historical WSTS data ---
from deepbullwhip.datasets import load_wsts
from deepbullwhip.demand.replay import ReplayDemandGenerator
wsts = load_wsts()
gen = ReplayDemandGenerator(data=wsts)

# --- Use perishable cost function ---
from deepbullwhip.cost.perishable import PerishableCost
cost = PerishableCost(holding_cost=0.15,
    backorder_cost=0.60, gamma=0.05, buffer=50)

# --- Minimal simulation (5 lines) ---
from deepbullwhip import *
import numpy as np
gen = SemiconductorDemandGenerator()
d = gen.generate(T=156, seed=42)
chain = SerialSupplyChain()
result = chain.simulate(d, np.full_like(d, d.mean()),
                           np.full_like(d, d.std()))
\end{lstlisting}

\newpage

\section{Scalability benchmarks}\label{app:scalability}

The scalability benchmarks results are shown in Tables~\ref{tab:scaling_N}, \ref{tab:scaling_T}, \ref{tab:scaling_K}, and \ref{tab:stress}.

\begin{table}
\centering
\caption{Scaling with $N$ ($T = 156$, $K = 4$).}\label{tab:scaling_N}
\begin{tabular}{rrrr}
\toprule
$N$ & Serial (s) & Vec.\ (s) & Speedup \\
\midrule
10 & 0.10 & 0.011 & $9\times$ \\
100 & 0.97 & 0.019 & $50\times$ \\
500 & 4.62 & 0.067 & $69\times$ \\
1,000 & 9.40 & 0.142 & $66\times$ \\
5,000 & 44.68 & 0.741 & $60\times$ \\
\bottomrule
\end{tabular}
\end{table}

\begin{table}
\centering
\caption{Scaling with $T$ ($N = 500$, $K = 4$).}\label{tab:scaling_T}
\begin{tabular}{rlrrr}
\toprule
$T$ & Horizon & Serial (s) & Vec.\ (s) & Speedup \\
\midrule
52 & 1 year & 1.50 & 0.032 & $47\times$ \\
156 & 3 years & 4.64 & 0.057 & $82\times$ \\
520 & 10 years & 15.89 & 0.254 & $63\times$ \\
1,040 & 20 years & 31.81 & 0.589 & $54\times$ \\
5,200 & 100 years & 148.69 & 3.076 & $48\times$ \\
\bottomrule
\end{tabular}
\end{table}

\begin{table}
\centering
\caption{Scaling with $K$ ($N = 500$, $T = 156$).}\label{tab:scaling_K}
\begin{tabular}{rrrrl}
\toprule
$K$ & Serial (s) & Vec.\ (s) & Speedup & $\text{BWR}_{\text{cum}}$ \\
\midrule
2  & 2.40  & 0.024 & $99\times$ & $5.5 \times 10^{1}$ \\
4  & 4.42  & 0.053 & $83\times$ & $1.5 \times 10^{3}$ \\
8  & 9.98  & 0.138 & $72\times$ & $2.4 \times 10^{8}$ \\
12 & 18.02 & 0.208 & $87\times$ & $8.3 \times 10^{14}$ \\
16 & 22.23 & 0.306 & $73\times$ & $9.4 \times 10^{21}$ \\
\bottomrule
\end{tabular}
\end{table}

\begin{table}
\centering
\caption{Stress test results (vectorized engine).}\label{tab:stress}
\begin{tabular}{rrrrrr}
\toprule
$N$ & $T$ & $K$ & Cells ($\times 10^6$) & Time (s) & Mem (MB) \\
\midrule
1,000 & 520 & 4 & 2.08 & 0.45 & 48 \\
5,000 & 156 & 4 & 3.12 & 0.62 & 71 \\
1,000 & 156 & 12 & 1.87 & 0.43 & 43 \\
10,000 & 156 & 4 & 6.24 & 1.55 & 143 \\
5,000 & 520 & 8 & 20.80 & 6.81 & 476 \\
\bottomrule
\end{tabular}
\end{table}

\newpage

\section{Diagnostic plots}\label{app:diagnostics}

Figures~\ref{fig:app_orders} through \ref{fig:app_wafer} provide detailed diagnostic views of the Experiment~1 simulation ($T = 156$, constant forecast). 

\begin{figure}
\centering
\includegraphics[width=\textwidth]{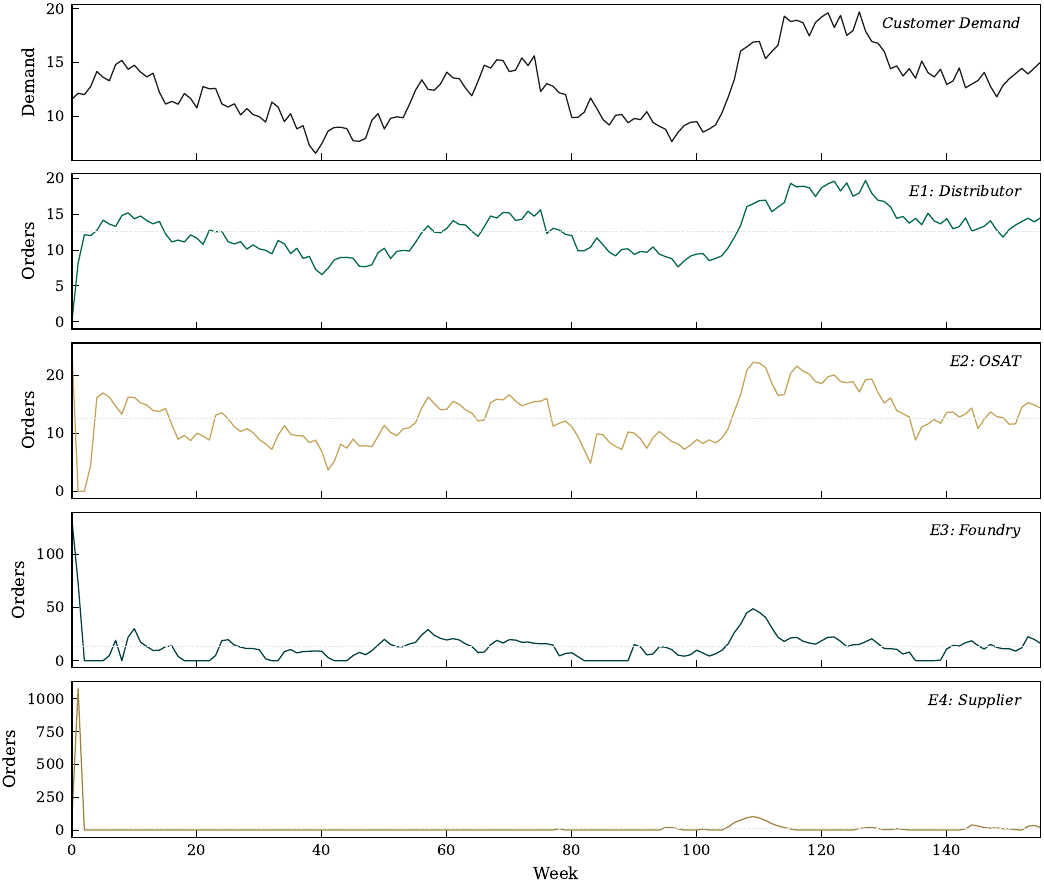}
\caption{Order quantities across all echelons. Consumer demand (top) is progressively amplified at each tier, with E4 order spikes exceeding 1,000 units.}\label{fig:app_orders}
\end{figure}

\begin{figure}
\centering
\includegraphics[width=\textwidth]{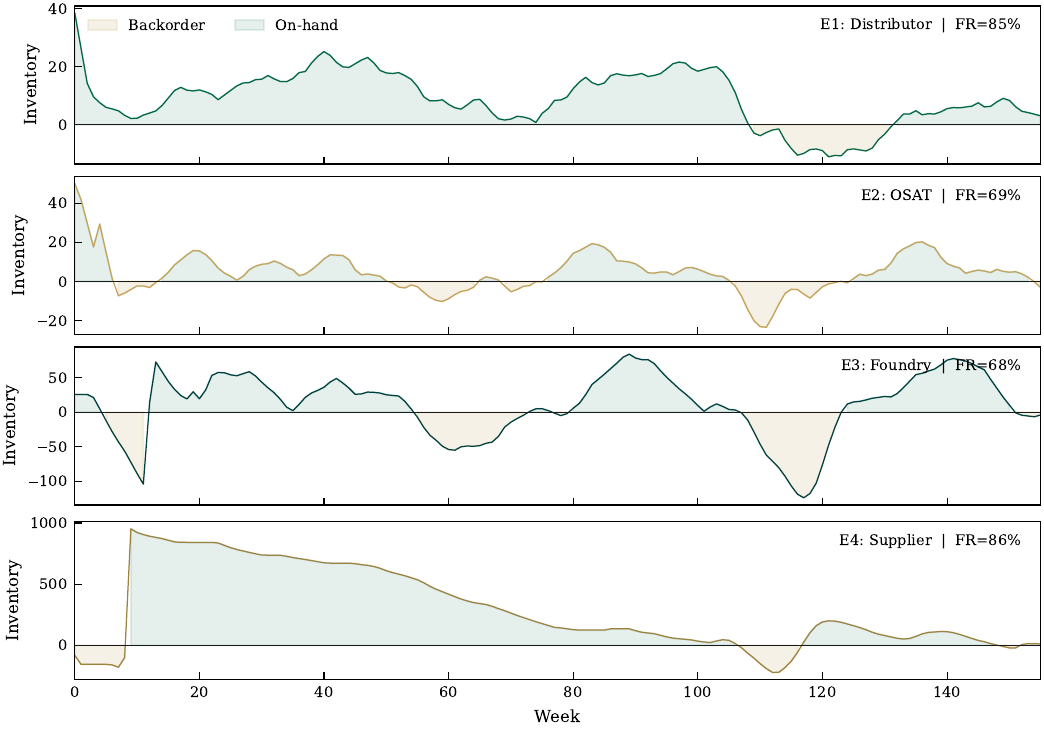}
\caption{Net inventory levels by echelon. Positive values represent on-hand stock; negative values represent backorders. E4 exhibits the largest excursions.}\label{fig:app_inv}
\end{figure}

\begin{figure}
\centering
\includegraphics[width=\textwidth]{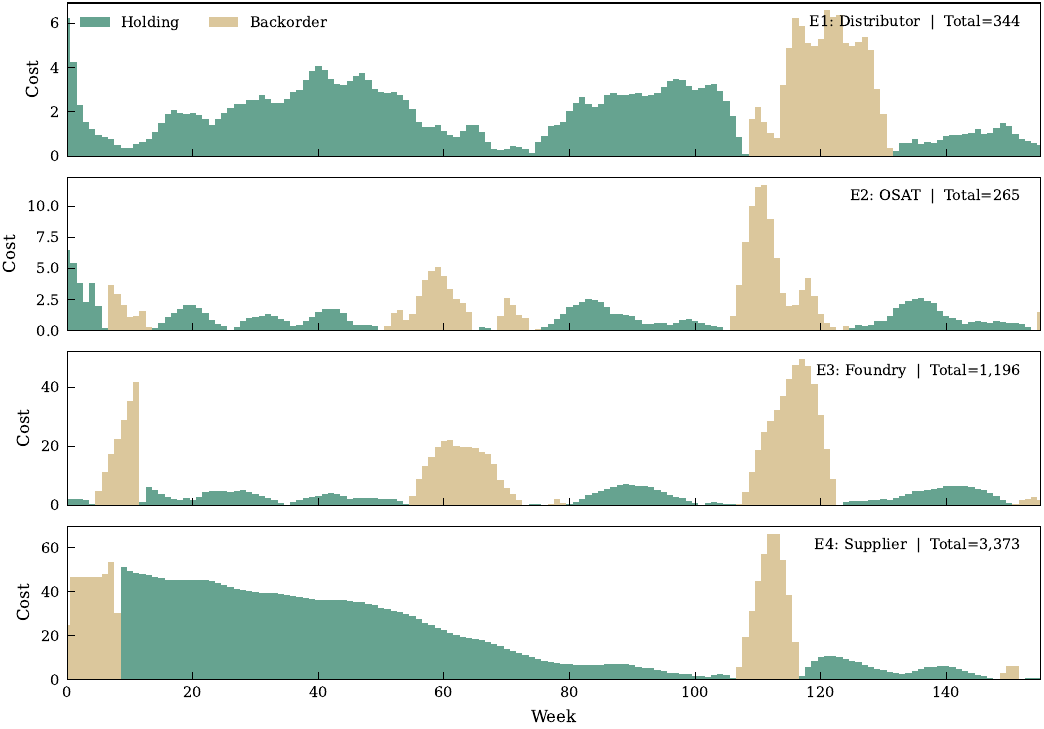}
\caption{Per-period cost decomposition by echelon showing the asymmetric holding and backorder components.}\label{fig:app_cost}
\end{figure}

\begin{figure}
\centering
\includegraphics[width=0.9\textwidth]{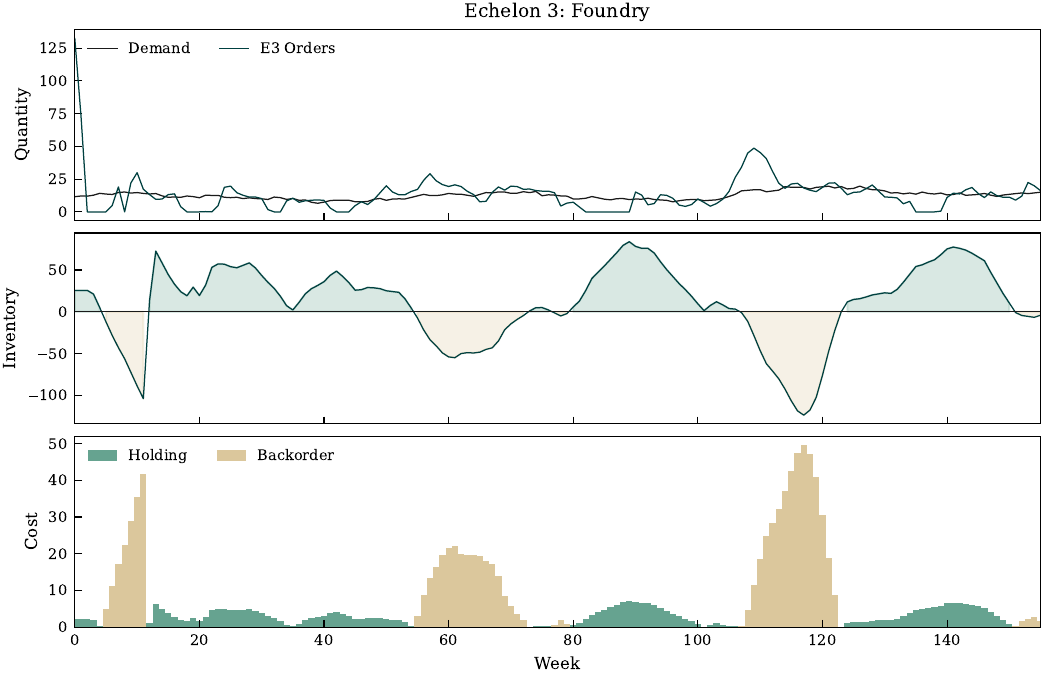}
\caption{Echelon detail for E3 (Foundry, $L_3 = 12$ weeks): demand, orders, inventory, and cost.}\label{fig:app_foundry}
\end{figure}

\begin{figure}
\centering
\includegraphics[width=0.9\textwidth]{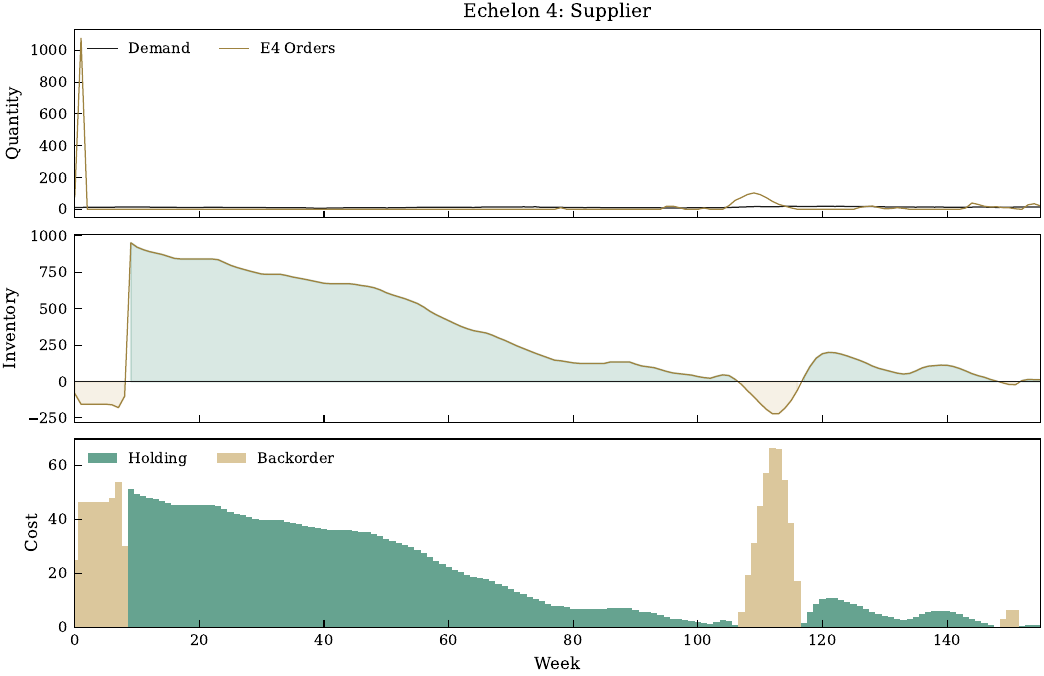}
\caption{Echelon detail for E4 (Wafer Supplier, $L_4 = 8$ weeks).}\label{fig:app_wafer}
\end{figure}

\section{Cost sensitivity}\label{app:sensitivity}

Finally, the total supply chain cost and cost ratio $b/h$ is shown in Figure~\ref{fig:cost_ratio}.

\begin{figure}
\centering
\includegraphics[width=0.7\textwidth]{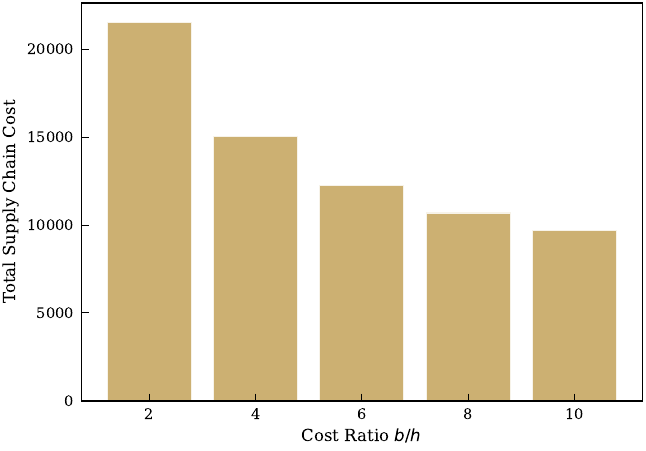}
\caption{Total supply chain cost vs.\ cost ratio $b/h$ under normalized costs ($h + b = 1$). Higher ratios concentrate cost in rare but extreme stockout events.}\label{fig:cost_ratio}
\end{figure}

\end{document}